\newtheorem{corollary}{Corollary}
\newtheorem{definition}{Definition}
\newtheorem{lemma}{Lemma}
\newtheorem{proposition}{Proposition}
\newtheorem{remark}{Remark}
\numberwithin{equation}{section}
\newtheorem{theorem}{Theorem}
\def\Z{\mathbb Z}
\theoremstyle{plain}
\newcommand{\R}{\mathbb{R}}
\DeclareMathOperator\supp{supp}
\DeclareMathOperator\card{card}
\def\D{\,\mathrm d}
\def\e{\mathrm e}
\def\supp{\mathop{\rm supp}\nolimits}
\def\spn{\mathop{\rm span}\nolimits}
\def\D{\mathrm{d}}
\begin{document}

\title[On the product in shift-invariant spaces]{On the product of periodic distributions. Product in shift-invariant  spaces}

\author[A. Aksentijevi\' c ]{Aleksandar Aksentijevi\' c }
\address{University of Kragujevac,
Faculty of Science, Department of Mathematics and Informatics, Radoja Domanovi\' ca 12,
34000 Kragujevac,
Serbia}
\email{aksentijevic@kg.ac.rs}

\author[S. Aleksi\' c]{Suzana Aleksi\' c}
\address{University of Kragujevac,
Faculty of Science, Department of Mathematics and Informatics, Radoja Domanovi\' ca 12,
34000 Kragujevac,
Serbia}
\email{suzana.aleksic@pmf.kg.ac.rs}

\author[S. Pilipovi\' c]{Stevan Pilipovi\'{c}}
\address{University of Novi Sad, Faculty of Sciences, Department of Mathematics and Informatics,  Trg Dositeja
Obradovica 4, 21000 Novi Sad, Serbia}
\email{pilipovic@dmi.uns.ac.rs}

\maketitle

\begin{abstract}
We connect through the Fourier transform shift-inva\-riant Sobolev type spaces $V_s\subset H^s$, $s\in\mathbb R,$ and the spaces of periodic distributions and analyze the properties of elements in such spaces with respect to the product. If the series expansions of two periodic distributions have compatible coefficient estimates, then their product is a periodic tempered distribution. We connect product of tempered distributions with the product of shift-invariant elements of $V_s$. The idea for the analysis of products comes from the H\"ormander's description of the Sobolev type wave front in connection with the product of distributions. Coefficient compatibility for the product of $f$ and $g$ in the case of "good" position of their Sobolev type wave fronts is proved in the 2-dimensional case. For larger dimension it is an open problem because of the difficulties on the description of the intersection of cones in dimension $d\geqslant3$.
\end{abstract}

\maketitle

\makeatletter
\renewcommand\@makefnmark%
{\mbox{\textsuperscript{\normalfont\@thefnmark)}}}
\makeatother

\section{Introduction}

Our main interest is the analysis of the product of distributions $f$ and $g$ defined in a neighbourhood of a point $x_0$ belonging to shift-invariant spaces $V_s$, $s\in\mathbb{R}$. We show that, locally, this product is also an element of a shift-invariant space $V_{s_0}$, for some $s_0\in\mathbb{R}$. For this purpose we use the idea of H\" ormander's wave front set (cf.\ \cite{Hor,Hor2}) and the fact that the product exists if the wave fronts are in an appropriate position. Actually, for periodic distributions and shift-invariant distributions we introduce the corresponding notion of compatible coefficient estimates which imply the existence of the product. The results for periodic distributions are transfered to the results for the product in shift-invariant spaces of distributions and vice versa.

Following the range function approach used in Bownik \cite{MB}-\cite{BR} (cf.\ \cite{BVR1}, \cite{BVR2}, \cite{H}, \cite{RS}),  we investigated in \cite{aap} the structure of shift-invariant  subspaces of Sobolev spaces $H^s=H^s(\mathbb R^d)$, $s\in\mathbb R,$ denoted by $V_s$, generated by at most countable family of generators (cf. \cite{AF} for Sobolev spaces). In this paper we consider $V_s$ generated by the finite set of generators,   elements of  $\mathcal A_{s,r}=\{\varphi_1,\ldots,\varphi_r\}\subset H^s$; $V_s$ is the closure of the span of integer translations of functions in $\mathcal A_{s,r}$, $s\in\mathbb{R}$. We will use the notation $V_s(\varphi_1,\ldots,\varphi_r)$ when we want to underline the generators of this space.  In the case $s=0$,  Bownik \cite{MB} gave a comprehensive analysis of the space $V$ ($V=V_0$, $\mathcal{A}_r=\mathcal{A}_{0,r}$). A deep extension of results in \cite{MB} was obtained in \cite{A1}, \cite{A2}, \cite{RS}, \cite{SSS}. Our investigation goes towards the multiplication in spaces $V_s$ for which we need to extend some results concerning the product of periodic distributions.

Let
   $E_s(\mathcal A_{s,r})=\{\varphi_i(\cdot+k):k\in\mathbb Z^d, i=1,\ldots,r\}$ be  a frame of $V_s$ (cf. \cite{KG} for frames). An
$f\in\mathcal{S}^\prime(\mathbb{R}^d)$  belongs to $V_s$ if and only if  its Fourier transform has the form
$\widehat{f}=
 \sum_{i=1}^r f_ig_i$,
 $f_i=\widehat{\varphi}_i\in L_{s}^2(\mathbb{R}^d)$ and $g_i=\sum_{k\in\mathbb{Z}^d}
 {a_k^i}\e^{2\pi \sqrt{-1}\langle\cdot,k\rangle}$ with ${(a^i_k)}_{k\in\mathbb{Z}^d}\in\ell^2$,  $i=1,\ldots,r$.  This is shown in \cite{aap}.
Note that, the products $f_ig_i$, $i=1,\ldots,r$, exist in $L^2_s(\mathbb R^d)=\mathcal F(H^s).$

 Another approach, with the frames consisting of the finite set of generators $\mathcal A_r\subset L^2(\mathbb R^d)$ and expansions with coefficients in  $\ell^2$-sequence space, was developed in \cite{AG}, \cite{AST}, \cite{BLZ}. The spaces with the sequences of coefficients in  $\ell^2_s$ were treated in \cite{SS}, where
 the weights are $(1+|k|^2)^{s/2}$, $k\in\mathbb Z^d$, $s\geqslant 0$, and the finite set of generators are subsets of $ L^2_s=\mathcal F(H^s)$, $s\geqslant 0$.
Actually, in \cite{AG} and \cite{AST}, $\ell^p_s$, $p\geqslant 1$, were considered, but here we restrict ourselves to the   case $p=2$.
  Moreover, connecting two different approaches to shift-invariant spaces $V_s$ and
  $\mathcal V^2_s$, $s>0$, under the assumption that the generators $\varphi_i$, $i=1,\ldots,r$, belong to $ H^s\cap L^2_s$,  we have given the characterization
   of elements in $V_s$  through the expansions with coefficients in $\ell^2_s.$
   The corresponding assertions hold for the intersections of all such spaces and their duals
   in the case when the generators are elements of $\mathcal S(\mathbb R^d)$ (see \cite{aap}).

Our framework in this paper  is the space of periodic distributions, see for example \cite{MPSV} and \cite{LS}, where the authors studied wave fronts through the analysis of Fourier expansions of periodic distributions.   See also  \cite{DV}  and \cite{Ruzh}, where  the authors studied generalized functions on  the $d$-dimensional torus $\mathbb T^d$ and discrete wave fronts.

The paper is organized as follows.  After recalling in Section 2 the basic facts about periodic distributions and shift-invariant spaces, we repeat in Section 3 our results of \cite{aap} concerning different approaches in \cite{AG}, \cite{AST}, \cite{MB}, \cite{SS}, connecting shift-invariant spaces with the subspaces of periodic distributions. In Section 4, after recalling results for the multiplication of periodic distribuctions $f_1$ and $f_2$ belonging to spaces $\mathscr P^{1,s_1}$, $\mathscr P^{2,s_2}$ respectively, we give our main result, Theorem \ref{t7} related to periodic ultradistributions which (according to Definition \ref{d1}) have compatible coefficient estimates. Then, in Theorem \ref{t8} we transfer this result to the product of elements of finitely generated shift-invariant spaces $V_s$. Since our approach was motivated by H\"ormander notion of Sobolev's wave fronts, we devote Section 5 to the product of periodic distributions and of shift-invariant distributions (and vice versa) relating the wave front sets with the compatibility coefficient condition in the expansions of $f_1$ and $f_2$.

 \section{Notation}

Let $x=(x_1,\ldots,x_d)\in\mathbb{K}^d$, where $\mathbb{K}^d\in\{\R^d,\Z^d\}$. We use notations $|x|=\sqrt{x_1^2+\cdots+x_d^2}$ and $\langle x\rangle^s=(1+|x|^2)^{s/2}$, $s\in\mathbb R$. Obviously, $|x|\leqslant\langle x\rangle$. Let $0<\eta\leqslant1$. As in \cite{MPSV}, we use the notation
 $$\mathbb T_{\eta,x}=\prod_{j=1}^{d}\left(x_j-\frac{\eta}2,x_j+\frac\eta2\right) \: \: \mbox{and }\: \: \mathbb T_\eta:= \mathbb T_{\eta,0},\;  \mathbb T=\mathbb T_1.$$
Define the Fourier transform
$\widehat{f}$ of an integrable function $f$ by
$\mathcal{F}f(t)=\widehat{f}(t)=\int_{\mathbb{R}^d}f(x)\e^{-2\pi \sqrt{-1}\langle x,t\rangle}\D x$,
 $t\in\mathbb{R}^d$ ($\mathcal F^{-1}f(t)=\widehat f(-t)$), where
 $\langle x,t\rangle=\sum_{i=1}^dx_it_i$, $x,t\in\mathbb{R}^d$.
Further on,
 $${\ell_s^p=\ell_s^p(\mathbb{Z}^d)=\bigg\{(c_k)_{k\in\mathbb{Z}^d}\,:\,\sum_{k\in\mathbb{Z}^d}|c_k|^p\langle k\rangle^{p\cdot s}<+\infty\bigg\}}, \quad s\in\mathbb R,\, p\geqslant 1.$$ We will consider the case
$p=2$. Then, the scalar product is given by  $\langle (c_k)_{k\in\mathbb{Z}^d},(d_k)_{k\in\mathbb{Z}^d}\rangle_{\ell_s^2}=\sum_{k\in\mathbb{Z}^d}c_k\overline{d}_k\langle k\rangle^{2s}$.

In the sequel we will denote by $C$ constants which are not the same in general; from the context will be clear that in various  inequalities they are different.

\subsection{Periodic distributions}

Our framework is the space of functions and distributions on $\mathbb{R}^{d}$ which are periodic of period $1$ in each variable, i.e.\ $T_nf(x)=f(x-n)=f(x)$, $x\in\R^d, n\in\Z^d$. We refer to the next literature \cite{AMS}, \cite{Beals}, \cite{LS}, \cite{VSV}. Let $x$, $y\in\mathbb{K}^d$. We use notation $e_y(x)=\e^{2\pi \sqrt{-1} \langle y,x\rangle}\;\; (\langle y,x\rangle=\sum_{i=1}^dy_i\overline{x}_i)$.
The space of periodic test functions $\mathscr{P}=\mathscr{P}(\R^d)$ consists of
smooth periodic functions of the form
$\varphi=\sum_{n\in\mathbb Z^d}\varphi_ne_n$ such that
$\sum_{n\in\Z^d}|\varphi_n|^2 \langle n\rangle^{2k}<+\infty$ for every $k\in\Z$
($\varphi_n=\int_{\mathbb T}\varphi(x)e_{-n}(x)\D x$, $n\in\Z^d$);
its topology is given via the sequence of norms  $\|\varphi\|_k=\sup_{x\in \mathbb T, |\alpha|\leqslant k}|\varphi^{(\alpha)}(x)|$,  $k\in\mathbb{N}_0=\mathbb{N}\cup\{0\}$.
The dual space of $\mathscr P$, the space of periodic distributions, is denoted by $\mathscr{P}^\prime$. One has:
$ f=\sum_{n\in\Z^d}{f_n e_n} \in \mathscr{P}^\prime$ if and only if $\sum_{n\in\Z^d}|f_n|^2\langle n\rangle^{-2k_0}<+\infty$, for some $k_0\in\mathbb{N}$. We use notation $\mathscr P'^{k_0}$ when this holds.  If $f=\sum_{n\in\Z^d}{f_n e_n}\in\mathscr{P}'$ and $\varphi=\sum_{n\in\Z^d}{\varphi_n e_n}\in\mathscr{P}$,
 then their dual pairing is given by $\left\langle f,\varphi\right\rangle=\sum_{n\in\Z^d}f_n \varphi_{n}$.

Denote
by $\mathscr P^{p,s}$, $p\geqslant1$, $s\in\R$, the space of elements  $h\in\mathcal D'(\mathbb R^d)$ with the property that  $h=\sum_{n\in\mathbb Z^d}a_ne_n,$ where $ (a_n)_{n\in\Z^d}\in\ell^{p}_s$. These spaces are subspaces of $\mathscr P'$ for $s\leqslant0$. Note, $\bigcap_{s\geqslant0}\mathscr P^{p,s}=\mathscr P$ and $\bigcup_{s\leqslant0}\mathscr P^{p,s}=\mathscr P^\prime.$

Let $x_0\in\mathbb R^d$,  $\psi \in\mathcal D(\mathbb T_{\eta,x_0})$ and   $f\in\mathcal D'(\mathbb R^d)$. Then  $(f\psi)_{per}$ is defined as the periodic extension,
 by $(f\psi)_{per}(t)=(f\psi)(x),$ where $t+k=x\in \mathbb T_{\eta,x_0}$, $k\in\mathbb Z^d$
(this $k$ is unique). So,
$$(f\psi)_{per}(t)=\sum_{k\in\mathbb Z^d}a_ke_k(t),\quad t\in\mathbb R^d,$$
where $a_k=\int_{\mathbb T_{\eta,x_0}}(f\psi)(t)e_{-k}(t)\D t$,  $k\in\mathbb Z^d$.
We denoted by $\mathscr{P}_{loc}^{p,s}$ the local space which contains distributions $f\in\mathcal{D}^\prime(\R^d)$ such that $(f\psi)_{per}\in\mathscr P^{p,s}$, for all $x_0\in\R^d$ and $\psi\in\mathcal{D}(\mathbb{T}_{1,x_0})$. In particular, we consider the cases $p=1,2$.

\subsection{Shift-invariant spaces}
  Recall (\cite{aap}),
 the Hilbert space $H(\mathbb{T},\ell^2_{s})$  consists of all vector valued measurable square integrable  functions $F:\mathbb{T}\to\ell_s^2$ with the norm
 $\|F\|_{H(\mathbb{T},\ell_s^2)}=\big(\int_{\mathbb{T}}\|F(t)\|_{\ell_s^2}^2\D t\big)^{1/2}<+\infty.$
In the case $s=0,$ it is denoted by $L^2(\mathbb{T},\ell^2)$.
If $\mathcal{A}_r\subset L^2(\mathbb{R}^d)$, then $\mathcal{A}_{s,r}=\{\varphi\in \mathcal{S}^\prime(\mathbb{R}^d):\widehat{\varphi}=\widehat{\psi}\langle\cdot\rangle^{-s} \;\mbox{for some } \;\psi\in\mathcal{A}_r\}$.

Note that any space $W\subset H^s$ is called shift-invariant if\ $\varphi\in W$ implies $T_k \varphi\in W$, for any $k\in \mathbb{Z}^d$.
We define $V_s=\overline{\spn}\{(1-\tfrac{\Delta}{4\pi^2})^{- s/2}T_k\psi: \psi\in\mathcal{A}_r, k\in\mathbb{Z}^d\}$, where $\Delta$ is the Laplacian. It is a shift-invariant space.

Following the definition of the mapping $\mathcal{T}:L^2\to L^2(\mathbb{T},\ell^2)$ (\cite{MB}), we define in \cite{aap}, ${\mathcal{T}_s}:H^s \to H(\mathbb{T},\ell_s^2)$
  ($\mathcal T=\mathcal T_s$, for $s=0$) by
 $${\mathcal{T}_s} \varphi(t)=\bigg(\frac{\widehat{\psi}(t+k)}{\langle k\rangle^s}\bigg)_{k\in\mathbb{Z}^d},\quad t\in\mathbb{T},\; \varphi\in H^s,$$
 where $\big(1-\frac{\Delta}{4\pi^2}\big)^{s/2}\varphi=\psi (\in L^2(\mathbb R^d))$.

 \begin{lemma} [\cite{aap}]\label{pomoc} Let $s\in\mathbb R.$  
\begin{itemize}
\item[a)] ${\mathcal{T}}_s: H^s\rightarrow H(\mathbb T,\ell^2_s)$ is an isometric isomorphism.
\item[b)] The following diagram of isometries  commutes
  \hspace*{3cm}\begin{align*}\hspace*{5cm}&L^2\quad \xrightarrow{\mathcal{T}}\quad L^2(\mathbb{T},\ell^2)\\
  &\downarrow \alpha_s\hspace*{1.5cm}\downarrow \beta_s\\
  &H^s \quad\xrightarrow{\mathcal{T}_s} \quad H(\mathbb{T},\ell_s^2),
  \end{align*} where\, $\alpha_s(g)=\mathcal{F}^{-1}\big(\frac{\widehat{g}(\cdot)}{\langle\cdot\rangle^s}\big)$\, and\, $\beta_s\big((f_k(\cdot))_{k\in\mathbb{Z}^d}\big)=
  \big(\frac{f_k(\cdot)}{\langle k\rangle^s}\big)_{k\in\mathbb{Z}^d}$; in particular,
  $\beta_s\big((\widehat{g}(\cdot+k))_{k\in\mathbb{Z}^d}\big)=\big(\frac{\widehat{g}(\cdot+k)}{\langle k\rangle^s}\big)_{k\in\mathbb{Z}^d}$.
\item[c)] Let $\varphi\in\mathcal S(\mathbb R^d).$ Then
 ${\mathcal{T}}_sT_j \varphi(\cdot)=e_{-j}(\cdot)                                                                                                                                                                                                                                                                                                                                                                                                                                                                                                                                                                                                                                                                                                                                                                                                                                                                                                                                                                                   {\mathcal{T}}_s \varphi(\cdot)$, $j\in\mathbb Z^d.$
 \end{itemize}
  \end{lemma}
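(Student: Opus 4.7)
The plan is to reduce parts (a) and (b) to the periodization of the Fourier transform (essentially Plancherel on $\mathbb{R}^d$ versus on $\mathbb{T}$), and part (c) to the translation-modulation duality combined with the integrality of $\langle j,k\rangle$ for $j,k\in\mathbb{Z}^d$.

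For part (a), fix $\varphi\in H^s$ and set $\psi=(1-\Delta/(4\pi^2))^{s/2}\varphi$, so that $\widehat{\psi}(\xi)=\langle\xi\rangle^s\widehat{\varphi}(\xi)$ and $\|\psi\|_{L^2}=\|\varphi\|_{H^s}$. The key observation is that the $\ell^2_s$ weight $\langle k\rangle^{2s}$ cancels exactly the denominator $\langle k\rangle^{2s}$ produced in $\mathcal{T}_s\varphi$, so
\[
\|\mathcal{T}_s\varphi\|^2_{H(\mathbb{T},\ell^2_s)}=\int_{\mathbb{T}}\sum_{k\in\mathbb{Z}^d}|\widehat{\psi}(t+k)|^2\,\D t=\|\widehat{\psi}\|^2_{L^2}=\|\varphi\|^2_{H^s},
\]
where Tonelli justifies the interchange and the fact that $\{\mathbb{T}+k:k\in\mathbb{Z}^d\}$ tiles $\mathbb{R}^d$ up to a null set gives the second equality. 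Surjectivity follows by reversing the construction: given $F=(F_k)_k\in H(\mathbb{T},\ell^2_s)$, define $h\in L^2(\mathbb{R}^d)$ by $h(t+k):=\langle k\rangle^s F_k(t)$ for $t\in\mathbb{T}$, $k\in\mathbb{Z}^d$, and set $\widehat{\varphi}:=h/\langle\cdot\rangle^s$; the same computation run in reverse shows $\varphi\in H^s$ and $\mathcal{T}_s\varphi=F$. Linearity is immediate, so $\mathcal{T}_s$ is an isometric isomorphism.

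For part (b), the commutativity is essentially a definition chase. Given $g\in L^2$, the element $\alpha_s(g)$ has Fourier transform $\widehat{g}/\langle\cdot\rangle^s$, so its associated $\psi$-function is $g$ itself and hence $\mathcal{T}_s(\alpha_s g)(t)=(\widehat{g}(t+k)/\langle k\rangle^s)_k$, which is exactly $\beta_s$ applied to $\mathcal{T}g(t)=(\widehat{g}(t+k))_k$. For part (c), one applies the standard formula $\widehat{T_j\varphi}(\xi)=\e^{-2\pi\sqrt{-1}\langle j,\xi\rangle}\widehat{\varphi}(\xi)$; since multiplication by $\langle\xi\rangle^s$ commutes with this scalar, the $\psi$-function attached to $T_j\varphi$ is $\e^{-2\pi\sqrt{-1}\langle j,\cdot\rangle}\psi$. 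Evaluating at $\xi=t+k$ produces the factor $\e^{-2\pi\sqrt{-1}\langle j,t+k\rangle}$, and the integrality $\langle j,k\rangle\in\mathbb{Z}$ lets one discard $\e^{-2\pi\sqrt{-1}\langle j,k\rangle}=1$, leaving the $k$-independent modulation $e_{-j}(t)$ in front of each component.

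The only real conceptual step is noticing that the weight structure in $\mathcal{T}_s$ and in $\ell^2_s$ is arranged precisely so that the two $\langle k\rangle^s$ factors cancel and a clean periodization identity emerges; after that, Plancherel carries the proof of (a), and (b) and (c) are formal manipulations. The sole technical obstacle is justifying measurability and almost-everywhere equality in the surjectivity part of (a), which follows from standard measure-theoretic arguments since the pieces $\mathbb{T}+k$ form a measurable partition of $\mathbb{R}^d$.
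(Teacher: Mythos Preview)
Your argument is correct in all three parts. The cancellation of $\langle k\rangle^{2s}$ in part~(a) is exactly the right observation, the surjectivity construction is sound, and the definition chase in (b) and the translation--modulation computation in (c) are both valid; the use of $\langle j,k\rangle\in\mathbb{Z}$ to drop the $k$-dependent phase is the essential point for (c).

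As for comparison with the paper: this lemma is quoted from~\cite{aap} and is not proved in the present paper, so there is no in-text argument to compare against. Your proof is the standard one (and presumably close to what appears in~\cite{aap}): reduce to the unweighted case via the built-in cancellation, then invoke the classical periodization/Plancherel identity that underlies Bownik's map $\mathcal{T}$. One minor remark: in part~(c) you could note explicitly that $(1-\Delta/(4\pi^2))^{s/2}$ commutes with integer translations (equivalently, that multiplication by $\langle\xi\rangle^s$ commutes with the modulation $e_{-j}$), which is what makes the $\psi$-function of $T_j\varphi$ equal to $T_j\psi$; you state this implicitly but it is the one place where the Schwartz hypothesis on $\varphi$ is not actually needed---the identity holds for any $\varphi\in H^s$.
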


\section{Structural theorems}

We introduce the following assumptions on generators $\psi^i$, $i=1,\ldots,r,$ of $V_s(\psi^1,\ldots,\psi^r)$,  in order to have that their linear
 combinations determine subspaces of $H^s$ and of $L^2_s$:
\begin{equation}\label{join}
 \psi^i\in H^s\cap L^2_s\cap  \mathcal{L}^\infty,\quad i=1,\ldots,r.
\end{equation}
 Recall \cite{AST}, the Wiener amalgam type space, denoted by $\mathcal{L}^\infty$ is defined by
$$\mathcal{L}^{\infty}=\bigg\{\psi:\|\psi\|_{\mathcal{L}^\infty}=\sup_{t\in\mathbb T}\sum_{j\in\mathbb{Z}^d}|\psi(t+j)|<+\infty\bigg\}$$
and following this paper in the case $p=2,$ in \cite{SS} is defined:
\begin{equation}
\label{001}\mathcal{V}_s^2=\bigg\{f: f=\sum_{i=1}^r\sum_{k\in\mathbb{Z}^d}c_k^iT_{-k}\psi^i,\; (c_k^i)_{k\in\mathbb{Z}^d}\in\ell_s^2,\;  i=1,\ldots,r\bigg\}.
\end{equation}

\begin{theorem} [\cite{aap}] \label{jed}
Let $s\geqslant 0$, and  $(\ref{join})$ hold.
\begin{itemize}\item[a)] Assume that
$\mathcal V^2_s$ and $\mathcal F(\mathcal V_s^2)$ are closed  in $L^2_s$. Then,
 $\mathcal{V}_s^2\subset H^s$  and  $\mathcal{V}_s^2=V_s(\psi^1,\ldots,\psi^r).$
In particular,
any element  $f\in {V}_s(\psi^1,\ldots,\psi^r)$ has the frame expansion as in $(\ref{001})$.
\item[b)] Assume that $s>\frac12$ and that
$\mathcal V^2_s$ is closed  in $L^2_s$. Then, $ \mathcal{F}(\mathcal V_s^2)$ is closed  in $L^2_s$ and both assertions
in $a)$ hold true.
\item[c)] Assume that the conditions of assertion $a)$ or conditions of assertion $b)$ hold. Then in $($both cases$)$,
\begin{itemize}
\item[(i)]  $(\mathcal V^2_s)'= \mathcal V^2_{-s}$, where $\mathcal V^2_{-s}$ is the space of formal series of the form
$$\hspace*{1cm}F(\cdot)=\sum_{i=1}^r\sum_{k\in\mathbb Z^d} b^i_k\psi^i(\cdot+k)\quad\text{such that}\quad\sum_{i=1}^r\sum_{k\in\mathbb Z^d} |b^i_k|^{2}\langle k\rangle^{-2s}<+\infty,
$$
with the dual pairing
$\langle F,f\rangle=\sum_{i=1}^r\sum_{k\in\mathbb Z^d} b^i_k c^i_k$,  $(f$ is of the form given in $(\ref{001})).$
\item[(ii)] $\mathcal V^2_{-s}=V_{-s}.$
\end{itemize}
\end{itemize}
\end{theorem}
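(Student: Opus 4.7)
The plan is to transfer the problem to the vector-valued Hilbert space $H(\mathbb T, \ell^2_s)$ via the isometric isomorphism $\mathcal T_s$ of Lemma \ref{pomoc}. There, by Lemma \ref{pomoc}(c), the shift $T_{-k}$ corresponds to multiplication by the character $e_k$, so both $V_s$ and $\mathcal V^2_s$ are realized as closed linear spans of translates of the fixed vector functions $\mathcal T_s \psi^i$, and the problem reduces to identifying these two closed spans. The Wiener amalgam hypothesis $\psi^i\in\mathcal{L}^\infty$ from (\ref{join}) will be the regularity that bridges convergence issues between $L^2_s$ and $H^s$.

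For part (a), closedness of $\mathcal F(\mathcal V^2_s)$ in $L^2_s=\mathcal F(H^s)$ immediately gives $\mathcal V^2_s\subset H^s$. For $f\in\mathcal V^2_s$ with frame expansion as in (\ref{001}), I would apply $\mathcal T_s$ and use Lemma \ref{pomoc}(c) to obtain
\begin{equation*}
\mathcal T_s f = \sum_{i=1}^r g_i\,\mathcal T_s \psi^i,\qquad g_i(\cdot)=\sum_{k\in\mathbb Z^d} c^i_k\, e_k(\cdot)\in L^2(\mathbb T),
\end{equation*}
since $(c^i_k)\in\ell^2_s\subset\ell^2$. This displays $\mathcal T_s f$ as an element of the closed span in $H(\mathbb T,\ell^2_s)$ of $\{e_k\,\mathcal T_s \psi^i\}_{k,i}$, which by Lemma \ref{pomoc}(c) is exactly $\mathcal T_s V_s$; hence $\mathcal V^2_s\subset V_s$. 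For the reverse inclusion I would adapt the range-function decomposition of Bownik: an arbitrary element of $\mathcal T_s V_s$ admits a representation $\sum_i m_i\,\mathcal T_s \psi^i$ with multipliers $m_i\in L^2(\mathbb T)$, whose Fourier coefficients $c^i_k$ yield the frame representation (\ref{001}); the additional property $(c^i_k)\in\ell^2_s$ is extracted from the combination of closedness of $\mathcal V^2_s$ in $L^2_s$ and the $\mathcal{L}^\infty$-bound on $\psi^i$, which together force the partial sums to be Cauchy in the weighted norm. The ``In particular'' statement is then just the identity $V_s=\mathcal V^2_s$ read from right to left.

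For part (b), the assumption $s>1/2$ gives the Sobolev embedding $H^s(\mathbb T)\hookrightarrow L^\infty(\mathbb T)$, making $H^s(\mathbb T)$ a Banach algebra under pointwise multiplication and allowing bounded multiplication by the $\ell^2_s$-coefficient functions in the vector-valued setting; this upgrades the closedness of $\mathcal V^2_s$ in $L^2_s$ to that of $\mathcal F(\mathcal V^2_s)$ in $L^2_s$, reducing the statement to part (a). For part (c), the duality $(\ell^2_s)'=\ell^2_{-s}$ under the weighted pairing, combined with the frame representation from (a), yields the bilinear pairing $\langle F,f\rangle=\sum_{i=1}^r\sum_{k\in\mathbb Z^d}b^i_k c^i_k$; a direct verification of boundedness and non-degeneracy identifies $(\mathcal V^2_s)'=\mathcal V^2_{-s}$. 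The equality $\mathcal V^2_{-s}=V_{-s}$ then follows from the general duality $(V_s)'=V_{-s}$ inherited from $(H^s)'=H^{-s}$, together with the identification of $(\mathcal V^2_s)'$ just obtained.

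The principal obstacle is the converse inclusion $V_s\subset\mathcal V^2_s$ in part (a): Bownik's range-function decomposition naturally provides multipliers $m_i\in L^2(\mathbb T)$, whose Fourier coefficients are a priori only in $\ell^2$, not $\ell^2_s$. Promoting them to genuine $\ell^2_s$-coefficients requires simultaneously exploiting both closedness hypotheses (on $\mathcal V^2_s$ and on $\mathcal F(\mathcal V^2_s)$) together with the Wiener amalgam regularity $\psi^i\in\mathcal{L}^\infty$, in order to close the diagram between the two sides of the Fourier transform. This is precisely why both hypotheses are needed in (a), while in (b) the single closedness hypothesis suffices because the Banach algebra structure of $H^s(\mathbb T)$ for $s>1/2$ performs the bridging automatically.
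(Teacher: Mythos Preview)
This paper does not contain a proof of Theorem~\ref{jed}: the theorem is stated here with the attribution \cite{aap} and is quoted verbatim from the authors' earlier article (A.~Aksentijevi\'c, S.~Aleksi\'c, S.~Pilipovi\'c, \emph{The structure of shift-invariant subspaces of Sobolev spaces}, Theor.\ Math.\ Phys.\ \textbf{218} (2024), 177--191). Section~3 of the present paper serves only to recall the structural results needed later; no argument is supplied. Consequently there is nothing in this paper against which your proposal can be compared, and a genuine assessment would require consulting \cite{aap} directly.

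That said, your outline is broadly in the right spirit---transferring via $\mathcal T_s$ to $H(\mathbb T,\ell^2_s)$ and invoking Bownik's range-function machinery is exactly the framework set up in Lemma~\ref{pomoc}---but what you have written is a strategy rather than a proof. Two points would need substantial work. First, your claim that closedness of $\mathcal F(\mathcal V^2_s)$ in $L^2_s$ ``immediately gives $\mathcal V^2_s\subset H^s$'' is not obvious: closedness of a subspace of $L^2_s$ says nothing by itself about membership in $H^s$, and you have not explained how (\ref{join}) enters here. Second, you correctly flag the reverse inclusion $V_s\subset\mathcal V^2_s$ as the crux, but the mechanism you describe (``closedness \ldots\ force the partial sums to be Cauchy in the weighted norm'') is too vague to be checked; the actual promotion of $\ell^2$ multiplier coefficients to $\ell^2_s$ is where the work lies, and your sketch does not indicate how the two closedness hypotheses and the $\mathcal L^\infty$ condition combine to achieve it.
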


\begin{theorem}[\cite{aap}]
Assume  $\psi^i\in\mathcal S(\mathbb R^d)$, $i=1,\ldots,r$. Then,
$\bigcap_{s\geqslant 0}\mathcal{V}_s^2=\bigcap_{s\geqslant 0}V_s$
and the expansion for their elements has the form as in  $(\ref{001})$ with
$$\sup_{k\in\mathbb Z^d}|c^i_k||k|^s<+\infty, \quad i=1,\ldots,r, \mbox{ for every } s>0.$$
Moreover, $\mathcal{F}\big(\bigcap_{s\geqslant 0}\mathcal{V}_s^2\big)
=\big\{\sum_{i=1}^r\widehat{\psi}^i(\cdot)\Phi_i(\cdot):\Phi_i\in\mathscr P\big\}$,
where $\Phi_i(\cdot)=\sum_{k\in\mathbb{Z}^d}c_k^ie_k(\cdot)$, $(c_k^i)_{k\in\mathbb{Z}^d}\in\ell_s^2$ for every $s\geqslant0$, $i=1,\ldots,r$, and
 $V_s^\prime=\mathcal V^2_{-s}$,  $\bigcup_{s\geqslant0}V_s^\prime=\bigcup_{s\geqslant0}\mathcal V^2_{-s}$. Also,
$\mathcal{F}\big(\bigcup_{s\leqslant 0}\mathcal{V}_s^2\big)=\big\{\sum_{i=1}^r\widehat{\psi}^i(\cdot)F_i(\cdot): F_i\in\mathscr P^\prime\big\}$,
 where $F_i(\cdot)=\sum_{k\in\mathbb{Z}^d}c_k^ie_k(\cdot)$, $(c_k^i)_{k\in\mathbb{Z}^n}\in\ell_s^2$ for some $s\leqslant 0$, $i=1,\ldots,r.$
\end{theorem}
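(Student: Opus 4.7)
The plan is to apply Theorem \ref{jed} at each fixed $s\geqslant 0$ and assemble the conclusions by intersection or union. Since $\psi^i\in\mathcal S(\mathbb R^d)$, Schwartz decay gives $\psi^i\in H^s\cap L^2_s\cap\mathcal L^\infty$ for every $s\in\mathbb R$, so hypothesis (\ref{join}) is satisfied uniformly in $s$. For the closedness of $\mathcal V^2_s$ and $\mathcal F(\mathcal V^2_s)$ in $L^2_s$, part b) of Theorem \ref{jed} covers $s>1/2$, while for $s\in[0,1/2]$ closedness follows from direct Schwartz estimates on the generators. Consequently $\mathcal V^2_s=V_s(\psi^1,\ldots,\psi^r)$ for every $s\geqslant 0$ by Theorem \ref{jed}\,a), and intersecting over $s$ gives $\bigcap_{s\geqslant 0}\mathcal V^2_s=\bigcap_{s\geqslant 0}V_s$.

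Next, any $f$ in this intersection has the expansion (\ref{001}) with $(c^i_k)_k\in\bigcap_{s\geqslant 0}\ell^2_s$ for each $i$. A standard comparison shows this intersection coincides with the space of rapidly decreasing sequences, equivalently with the condition $\sup_{k\in\mathbb Z^d}|c^i_k|\langle k\rangle^s<+\infty$ for every $s>0$; since $\langle k\rangle$ and $|k|$ are comparable off the origin, this is the stated supremum estimate.

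For the Fourier-side characterization, take termwise Fourier transform of (\ref{001}): using $\mathcal F(T_{-k}\psi^i)=e_k\widehat{\psi^i}$ one obtains $\widehat f=\sum_{i=1}^r\widehat{\psi^i}\,\Phi_i$ with $\Phi_i=\sum_k c^i_k e_k$, and the rapid decay of $(c^i_k)_k$ is exactly the sequence characterization of $\mathscr P$ recalled in Section 2. The dual statement $V'_s=\mathcal V^2_{-s}$ is Theorem \ref{jed}\,c)(ii), so taking unions over $s\geqslant 0$ is immediate. The Fourier description of $\bigcup_{s\leqslant 0}\mathcal V^2_s$ then runs identically, with $F_i=\sum_k c^i_k e_k$ and $(c^i_k)_k\in\ell^2_s$ for some $s\leqslant 0$, which is the Fourier-coefficient description of $\mathscr P'$ from Section 2.

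The main technical point I anticipate is justifying the product $\widehat{\psi^i}F_i$ and the convergence of the termwise Fourier series in $\mathcal S'(\mathbb R^d)$ when $F_i$ is only a periodic distribution. Since $\widehat{\psi^i}\in\mathcal S$, it acts as a smooth pointwise multiplier; pairing the series against an arbitrary $\varphi\in\mathcal S(\mathbb R^d)$ and playing the rapid decay of $\widehat{\psi^i}\varphi$ against the polynomial growth of $(c^i_k)_k$ coming from $(c^i_k)_k\in\ell^2_s$ produces an absolutely convergent numerical series. This is the place where the full strength of $\psi^i\in\mathcal S(\mathbb R^d)$, rather than merely $\psi^i\in H^s\cap L^2_s\cap\mathcal L^\infty$, is genuinely used; away from this point the argument is a mechanical combination of Theorem \ref{jed} with the standard Fourier-coefficient characterizations of $\mathscr P$ and $\mathscr P'$.
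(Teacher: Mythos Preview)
The paper does not prove this theorem here; it is quoted from the authors' earlier article \cite{aap} and stated without proof, so there is no in-paper argument to compare yours against. Your overall strategy---apply Theorem~\ref{jed} at each fixed $s$, then intersect or take unions, and translate the result through the Fourier-coefficient descriptions of $\mathscr P$ and $\mathscr P'$---is the natural one and almost certainly matches what \cite{aap} does.

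That said, there is a misreading in your appeal to Theorem~\ref{jed}\,b). Part b) does \emph{not} establish that $\mathcal V^2_s$ is closed in $L^2_s$; it \emph{assumes} $\mathcal V^2_s$ is closed and, for $s>1/2$, deduces that $\mathcal F(\mathcal V^2_s)$ is then also closed. So the sentence ``part b) of Theorem~\ref{jed} covers $s>1/2$'' does not do what you need: for every $s\geqslant 0$ you still owe an argument that $\mathcal V^2_s$ is closed in $L^2_s$, and ``direct Schwartz estimates on the generators'' is not one. Closedness of this type is usually obtained from a lower frame (or Riesz) bound for $\{T_k\psi^i\}$, equivalently a lower bound on the dual Gramian on $\mathbb T$, and that does not follow from $\psi^i\in\mathcal S(\mathbb R^d)$ alone without some nondegeneracy hypothesis; presumably \cite{aap} supplies this ingredient.

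A smaller point you slide past: membership in $\bigcap_s\mathcal V^2_s$ says that for each $s$ there exist coefficients $(c^{i,s}_k)\in\ell^2_s$ realizing (\ref{001}); to conclude that a single choice lies in $\bigcap_s\ell^2_s$ you need the \emph{same} coefficients to work for all $s$. This is automatic if the translates form a Riesz sequence (coefficients are unique), but in the bare frame setting it requires an extra word---for instance, that the canonical dual-frame coefficients are independent of $s$.
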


\section{Multiplication}\label{multiplication}

Let  $f_1=\sum_{n\in\mathbb Z^d}f_{1,n}e_n\in\mathscr{P}^{1,s}$ and $f_2=\sum_{n\in\mathbb Z^d}f_{2,n}e_n\in\mathscr{P}^{2,s}$.
Their  product is defined  as $$f=f_{1}f_{2}:=\sum_{n\in\mathbb{Z}^d}f_{n}e_{n}, \mbox{ where } f_n=\sum_{j\in\mathbb{Z}^{d}}f_{1,n-j}f_{2,j}, \;n\in\Z^d.$$
Then (\cite{MPSV}), $f\in\mathscr{P}^{2,s}$ and the  mapping
$$\mathscr{P}^{1,s}\times\mathscr{P}^{2,s}\ni (f_{1},f_{2})\mapsto f_{1}f_{2} \in \mathscr{P}^{2,s}$$
is continuous. If $s$, $s_1$, $s_2\in\R$ satisfy
$s_1+s_2\geqslant0$ and $s\leqslant\min\{s_1,s_2\}$, then the mapping
\begin{equation}\label{4.1}\mathscr{P}^{1,s_1}\times\mathscr{P}^{2,s_2}\ni (f_{1},f_{2})\mapsto f_{1}f_{2} \in \mathscr{P}^{2,s}
\end{equation} is continuous.

This implies the following assertion.
\begin{proposition}
Let $f_1(\cdot)=\sum_{k\in\mathbb Z^d}f_{1,k}\varphi(\cdot+k)$, so that $(f_{1,k})_{k\in\Z^d}\in\ell^{1}_{s_1}$,
$f_2(\cdot)=\sum_{k\in\mathbb Z^d}f_{2,k}\phi(\cdot+k)$, so that $(f_{2,k})_{k\in\Z^d}\in\ell^{2}_{s_2}$, where $\varphi\in H^{s_1}$, $\phi\in H^{s_2}\cap\mathcal{F}^{-1}\big(L^\infty(\R^d)\big)$ and $s_1+s_2\geqslant0$. Then $f=f_1*f_2\in V_s$, where $s\leqslant\min\{s_1,s_2\}$, is  generated by $\varphi*\phi\in H^s$, i.e. \[f(\cdot)=\sum_{n\in\mathbb Z^d}f_n(\varphi*\phi)(\cdot+n),\] where $(f_n)_{n\in\Z^d}\in\ell^2_s$.
\end{proposition}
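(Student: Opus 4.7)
The plan is to pass to the Fourier side, where the convolution $f_1*f_2$ becomes a pointwise product and each shift-invariant expansion factors as a fixed $L^2$-multiplier times a purely periodic distribution; this reduces matters to the periodic multiplication supplied by $(\ref{4.1})$.

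First I would use $\widehat{\varphi(\cdot+k)}(t)=e_k(t)\widehat\varphi(t)$ to rewrite
\[
\widehat{f_1}(t)=\widehat\varphi(t)\,g_1(t),\qquad \widehat{f_2}(t)=\widehat\phi(t)\,g_2(t),\qquad g_i(t)=\sum_{k\in\Z^d}f_{i,k}e_k(t),
\]
so that $g_1\in\mathscr P^{1,s_1}$, $g_2\in\mathscr P^{2,s_2}$ and $\widehat{f_1*f_2}=\widehat{\varphi*\phi}\cdot g_1g_2$. Next I would check that $\varphi*\phi\in H^s$: since $\widehat\phi\in L^\infty$, $s\leqslant s_1$, and $\langle t\rangle\geqslant 1$, one has
\[
\int_{\R^d}|\widehat\varphi(t)\widehat\phi(t)|^2\langle t\rangle^{2s}\D t\leqslant\|\widehat\phi\|_{L^\infty}^2\int_{\R^d}|\widehat\varphi(t)|^2\langle t\rangle^{2s_1}\D t<+\infty.
\]

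The hypotheses $s_1+s_2\geqslant0$ and $s\leqslant\min\{s_1,s_2\}$ are exactly those required by the continuity statement $(\ref{4.1})$, which yields
\[
g_1g_2=\sum_{n\in\Z^d}f_ne_n\in\mathscr P^{2,s},\qquad f_n=\sum_{j\in\Z^d}f_{1,n-j}f_{2,j},
\]
with $(f_n)_{n\in\Z^d}\in\ell^2_s$. Inverting the Fourier transform and using $\mathcal F^{-1}(\widehat{\varphi*\phi}\,e_n)(\cdot)=(\varphi*\phi)(\cdot+n)$ then gives the claimed expansion
\[
f(\cdot)=\sum_{n\in\Z^d}f_n(\varphi*\phi)(\cdot+n),
\]
which, in view of Theorem \ref{jed} identifying $\mathcal V^2_s$ with $V_s$, exhibits $f$ as an element of $V_s(\varphi*\phi)$ with the required coefficient sequence.

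The only technical point I expect to monitor is the manipulation $\widehat{\varphi*\phi}\cdot g_1g_2=\sum_n f_n\widehat{\varphi*\phi}\,e_n$ on the Fourier side, i.e.\ interchanging the $\mathscr P^{2,s}$-convergent Fourier series for $g_1g_2$ with multiplication by the $L^2_s$-factor $\widehat{\varphi*\phi}$; this is legitimate precisely because $\widehat\phi\in L^\infty$ and because $(\ref{4.1})$ delivers $\ell^2_s$-convergence of the coefficients, so the remainder of the argument is formal and the membership $f\in V_s(\varphi*\phi)$ follows at once.
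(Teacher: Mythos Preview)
Your proof is correct and follows essentially the same route as the paper: pass to the Fourier side, factor $\widehat{f_i}$ as $\widehat{\text{generator}}\cdot g_i$ with $g_i\in\mathscr P^{i,s_i}$, apply the periodic multiplication $(\ref{4.1})$ to obtain $(f_n)\in\ell^2_s$, and invert to recover the shift-invariant expansion. The only superfluous step is your closing appeal to Theorem~\ref{jed}: that theorem requires the extra hypotheses $(\ref{join})$ and closedness of $\mathcal V^2_s$, which are not assumed here, but the paper (and your own argument up to that point) simply reads ``$f\in V_s(\varphi*\phi)$'' as the exhibited expansion with $\ell^2_s$ coefficients, so no further identification is needed.
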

\begin{proof} By the assumptions, $\widehat{\varphi}\widehat{\phi}\in L_s^2(\R^d)$. Since
$$\widehat{f}_1\widehat{f}_2=\widehat{\varphi}\widehat{\phi}\sum_{n\in\Z^d}f_ne_n,$$
where $f_n=\sum_{j\in\Z^d}f_{1,n-j}f_{2,j}$, $n\in\mathbb{Z}^d$ belongs to $\ell^2_s$, by \eqref{4.1} one has that
\begin{align*}f(t)&=(f_1*f_2)(t)\\&=(\varphi*\phi)(t)*\sum_{n\in\Z^d}f_n\delta(t+n)\\&=\sum_{n\in\Z^d}f_n(\varphi*\phi)(t+n),\quad t\in\R^d,\end{align*}
whence the assertion follows.
\end{proof}

The previous considerations allow us to introduce multiplication in the local versions of these spaces.
Let $f_1\in\mathscr P_{loc}^{1,s}$ and $f_{2}\in\mathscr{P}_{loc}^{2,s}$. To define their product $f=f_{1}f_{2}$, we proceed locally. Let $x_{0}\in\mathbb{R}^{d}$ and $0<\eta<1$. Let
$\phi\in \mathscr{D}(\mathbb T_{1,x_{0}})$ be such that $\phi(x)=1$ for
$x\in \mathbb T_{\varepsilon,x_{0}}$, $\varepsilon<\eta$. We define
$f_{\eta,x_{0}}$ as the restriction to $\mathbb T_{\eta,x_{0}}$ of the product $(\phi f_{1})_{per} (\phi f_{2})_{per}$.
So, $f_{\eta,x_{0}}\in\mathscr{D}^\prime(\mathbb T_{\eta,x_{0}})$.
By the use of the partition of unity the authors of \cite{MPSV} have the next assertion.
\begin{corollary}[\cite{MPSV}] The product $f=f_1f_2$ of $f_1\in\mathscr P_{loc}^{1,s_1}$ and $f_2\in\mathscr P_{loc}^{2,s_2}$ is an element of $\mathscr P_{loc}^{2,s}$, where $s_1+s_2\geqslant0$ and $s\leqslant\min\{s_1,s_2\}$. Moreover, the mapping
$$\mathscr P_{loc}^{1,s_1}\times\mathscr P_{loc}^{2,s_2}\ni (f_1,f_2)\mapsto f_1f_2=f \in\mathscr P_{loc}^{2,s}$$
is continuous.
\end{corollary}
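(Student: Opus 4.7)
The plan is to construct $f$ by patching the local products $f_{\eta,x_{0}}$ via a partition of unity, and then reduce the regularity claim to the continuous bilinear multiplication (\ref{4.1}). I fix a smooth partition of unity $1=\sum_{k}\chi_{k}$ on $\R^{d}$ subordinate to a cover by unit cubes $\mathbb T_{1,x_{k}}$, pick for each $k$ a cutoff $\phi_{k}\in\mathcal D(\mathbb T_{1,x_{k}})$ with $\phi_{k}\equiv 1$ on a neighbourhood of $\supp\chi_{k}$, and set $f:=\sum_{k}\chi_{k}\,(\phi_{k}f_{1})(\phi_{k}f_{2})$, each summand being the restriction to $\mathbb T_{1,x_{k}}$ of the periodic product $(\phi_{k}f_{1})_{per}(\phi_{k}f_{2})_{per}$, which is meaningful by (\ref{4.1}) together with the defining property of $\mathscr P_{loc}^{i,s_{i}}$.

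To verify $f\in\mathscr P_{loc}^{2,s}$ I fix an arbitrary $x_{0}\in\R^{d}$ and $\psi\in\mathcal D(\mathbb T_{1,x_{0}})$, and choose one auxiliary cutoff $\phi\in\mathcal D(\mathbb T_{1,x_{0}})$ with $\phi\equiv 1$ on $\supp\psi$. On $\supp\psi$ the sum defining $f$ reduces to $(\phi f_{1})(\phi f_{2})$, so $\psi f=\psi(\phi f_{1})(\phi f_{2})$, with both sides supported in the open cube. Since $\psi_{per}\in\mathscr P$, periodising yields the identity
\[
(\psi f)_{per}=\psi_{per}\cdot (\phi f_{1})_{per}\cdot (\phi f_{2})_{per}\quad\text{in }\mathscr P'.
\]
By hypothesis $(\phi f_{i})_{per}\in\mathscr P^{i,s_{i}}$ depends continuously on $f_{i}$; a first application of (\ref{4.1}) places their product in $\mathscr P^{2,s}$, and a second application, using $\psi_{per}\in\mathscr P\subset \mathscr P^{1,s'}$ for any $s'\geqslant -s$, keeps the triple product in $\mathscr P^{2,s}$. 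Composing these continuous maps furnishes both the membership $(\psi f)_{per}\in\mathscr P^{2,s}$ and the joint continuity in $(f_{1},f_{2})$, which is exactly the advertised continuity of $\mathscr P_{loc}^{1,s_{1}}\times\mathscr P_{loc}^{2,s_{2}}\to\mathscr P_{loc}^{2,s}$.

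The main obstacle — and the reason the partition of unity enters — is coherence: one must verify that the local distribution $f_{\eta,x_{0}}$ does not depend on the choice of cutoff $\phi$, so that the piecewise sum $\sum_{k}\chi_{k}(\phi_{k}f_{1})(\phi_{k}f_{2})$ genuinely defines a single element of $\mathcal D'(\R^{d})$. If $\phi^{(1)},\phi^{(2)}$ are two admissible cutoffs near $x_{0}$, then for every $\psi$ supported where both equal $1$ the displayed identity yields the same $(\psi f)_{per}$, so $\psi f$ is well defined on $\mathbb T_{\varepsilon,x_{0}}$; a standard sheaf-theoretic argument on overlaps of cubes in the cover then glues the local data into a globally coherent $f$. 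Once this coherence is secured, the regularity and continuity follow essentially immediately from (\ref{4.1}).
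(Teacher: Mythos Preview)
Your argument is correct and matches the approach the paper attributes to \cite{MPSV}: the paper does not supply its own proof of this corollary but merely records that ``by the use of the partition of unity the authors of \cite{MPSV} have the next assertion.'' Your construction of $f$ via a subordinate partition of unity, the reduction of $(\psi f)_{per}$ to a triple product of periodic distributions, and the two successive applications of (\ref{4.1}) are exactly the mechanism one expects, and your coherence discussion (independence of the local product from the auxiliary cutoff) is the point that justifies gluing. One small remark: when you write ``on $\supp\psi$ the sum defining $f$ reduces to $(\phi f_{1})(\phi f_{2})$,'' this already presupposes the coherence you establish only in the final paragraph, so logically that step should come first; otherwise the argument is complete.
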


Now we consider the product of two periodic distributions.

\begin{theorem}\label{t7} Let $f_1$, $f_2\in\mathscr{P}^\prime$, i.e.
$$f_1=\sum_{i=1}^{l_1}\sum_{k\in\mathbb{Z}^d}a^i_{1,k}e_k,\quad
f_2=\sum_{j=1}^{l_2}\sum_{k\in\mathbb{Z}^d}a_{2,k}^je_k,$$
such that there exist sets $\Lambda_i^1$, $i=1,\ldots,l_1$, and  $\Lambda_j^2$, $j=1,\ldots,l_2$, subsets of $\mathbb{Z}^d$ so that
\begin{equation}\label{5.4}\sum_{k\in\Lambda_i^1}|a^i_{1,k}|^2\langle k\rangle^{-2\alpha_1}<+\infty, \quad
\sum_{k\in\mathbb{Z}^d\setminus\Lambda_i^1}|a^i_{1,k}|^2\langle k\rangle^{2\beta_1}<+\infty,
\end{equation}
\begin{equation}\label{5.5} \sum_{m\in\Lambda_j^2}|a^j_{2,m}|^2\langle m\rangle^{-2\alpha_2}<+\infty,
\quad\sum_{m\in\mathbb{Z}^d\setminus\Lambda_j^2}|a^j_{2,m}|^2\langle m\rangle^{2\beta_2}<+\infty,
\end{equation} for $i=1,\ldots,l_1$, $j=1,\ldots,l_2$, some $\beta_1\geqslant\alpha_2\geqslant0$, $\beta_2\geqslant\alpha_1\geqslant0$, and $\Lambda_i^1\cap(-\Lambda_j^2)=\emptyset$, $i=1,\ldots,l_1$, $j=1,\ldots,l_2$.
Moreover, we assume that for every $i=1,\ldots,l_1$, for every $j=1,\ldots,l_2$ and every $n\in\mathbb{Z}^d$, there exist $C>0$ and $\gamma\geqslant1$ such that
\begin{equation}\label{5.6}c_{i,j}^1(n)=\card\{k\in\mathbb{Z}^d\,:\,n-k\in\Lambda_j^2\, \wedge \, k\in\Lambda_i^1\}\leqslant C |n|^{\gamma}.
\end{equation}
Then there exists $\tau\in\mathbb R$ such that  $f_1f_2\in\mathscr{P}'^\tau$.
\end{theorem}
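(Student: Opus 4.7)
The plan is to bound the $n$-th Fourier coefficient $c_n$ of the product $f_1f_2=\sum_n c_n e_n$ polynomially in $\langle n\rangle$; once $|c_n|\leqslant C\langle n\rangle^{M}$ is established, taking $\tau>M+d/2$ yields $(c_n)\in\ell^2_{-\tau}$ and hence $f_1f_2\in\mathscr{P}'^{\tau}$.

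The first step is to split each $f_l$ ($l=1,2$) into its good and bad parts relative to the partition $\mathbb{Z}^d=(\mathbb{Z}^d\setminus\Lambda_i^l)\cup\Lambda_i^l$. Writing $f_1=F_1^g+F_1^b$ and $f_2=F_2^g+F_2^b$, where $F_l^g$ carries the coefficients outside $\Lambda_i^l$ and $F_l^b$ those inside, the hypotheses \eqref{5.4}--\eqref{5.5} immediately give $F_l^g\in\mathscr{P}^{2,\beta_l}$ and $F_l^b\in\mathscr{P}^{2,-\alpha_l}$. Expanding $f_1f_2$ into the four cross products, the Fourier coefficient of each piece is a convolution sum $\sum_{k}a^i_{1,n-k}a^j_{2,k}$ restricted to the appropriate product of index sets.

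For the three mixed cross products $F_1^gF_2^g$, $F_1^gF_2^b$, $F_1^bF_2^g$, a weighted Cauchy--Schwarz together with a standard Peetre-type inequality allows one to distribute the weight $\langle n\rangle$ coming from $\langle n-k\rangle$ and arrive at a bound $|c_n|\leqslant C\langle n\rangle^{M}$. In $F_1^gF_2^g$ the conditions $\beta_1,\beta_2\geqslant0$ already suffice; in $F_1^gF_2^b$ the key is the assumption $\beta_1\geqslant\alpha_2$, which is exactly what is needed to absorb the bad weight $\langle k\rangle^{-\alpha_2}$ of the second factor into the good weight $\langle n-k\rangle^{\beta_1}$ of the first; symmetrically, $\beta_2\geqslant\alpha_1$ controls $F_1^bF_2^g$.

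The main obstacle is the bad--bad piece $F_1^bF_2^b$, whose Fourier coefficient has the form
\[
c_n^{bb,i,j}=\sum_{\substack{m+k=n\\ m\in\Lambda_i^1,\,k\in\Lambda_j^2}}a^i_{1,m}\,a^j_{2,k},
\]
and where the naive weighted Cauchy--Schwarz would demand $\alpha_1+\alpha_2\leqslant0$, which is not assumed. This is exactly where the two geometric hypotheses enter. The disjointness $\Lambda_i^1\cap(-\Lambda_j^2)=\emptyset$ makes the sum empty at $n=0$, and for $n\neq0$ the counting bound \eqref{5.6} leaves at most $c^1_{i,j}(n)\leqslant C|n|^{\gamma}$ surviving terms. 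Cauchy--Schwarz applied to this finite sum, combined with the pointwise bounds $|a^i_{1,m}|\leqslant C\langle m\rangle^{\alpha_1}$ and $|a^j_{2,k}|\leqslant C\langle k\rangle^{\alpha_2}$ (both immediate from \eqref{5.4}--\eqref{5.5}) and Peetre's inequality used to trade $\langle m\rangle$ for $\langle n\rangle$ and $\langle k\rangle$ on the confined interface set $S_n$, should yield a polynomial estimate $|c_n^{bb,i,j}|\leqslant C\langle n\rangle^{M'}$ for some $M'=M'(\alpha_1,\alpha_2,\gamma)$. Summing the four contributions over $i,j$ and choosing $\tau$ larger than $\max(M,M')+d/2$ then gives $f_1f_2\in\mathscr{P}'^{\tau}$; this is the discrete/periodic analogue of H\"ormander's criterion that the wave front sets of the factors be in good position.
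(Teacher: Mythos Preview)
Your overall architecture coincides with the paper's: reduce to $l_1=l_2=1$, split each $f_l$ into its piece on $\Lambda^l$ and on $\mathbb{Z}^d\setminus\Lambda^l$, and treat the four cross products separately. Your handling of the three ``mixed'' products, and in particular your identification of where the hypotheses $\beta_1\geqslant\alpha_2$ and $\beta_2\geqslant\alpha_1$ enter, matches the paper's argument.

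The gap is in the bad--bad piece $F_1^bF_2^b$. Your proposed ingredients are (i) the pointwise bounds $|a^i_{1,m}|\leqslant C\langle m\rangle^{\alpha_1}$, $|a^j_{2,k}|\leqslant C\langle k\rangle^{\alpha_2}$, (ii) Peetre's inequality, and (iii) the cardinality bound $|S_n|\leqslant C|n|^{\gamma}$. But Peetre only gives $\langle m\rangle^{\alpha_1}\leqslant C\langle n\rangle^{\alpha_1}\langle k\rangle^{\alpha_1}$ (since $m=n-k$), so after substituting the pointwise bounds you are left with
\[
|c_n^{bb,i,j}|\;\leqslant\; C\langle n\rangle^{\alpha_1}\sum_{k\in S_n}\langle k\rangle^{\alpha_1+\alpha_2},
\]
and knowing that $S_n$ has at most $C|n|^{\gamma}$ elements tells you nothing about how large those elements are. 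A priori $S_n$ could contain $k$'s with $|k|$ arbitrarily large compared to $|n|$, so no polynomial bound on $|c_n^{bb,i,j}|$ follows from (i)--(iii) alone. The same obstruction appears if you run Cauchy--Schwarz against the $\ell^2$ hypotheses instead of the pointwise bounds: an uncontrolled factor $\max_{k\in S_n}\langle k\rangle^{2(\alpha_1+\alpha_2)}$ survives.

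What the paper does differently is to read off from \eqref{5.6} not merely a cardinality bound but a \emph{size} bound on the interface set: for $k\in\Lambda^2$ with $n-k\in\Lambda^1$ one has $\langle k\rangle,\,\langle n-k\rangle\leqslant C\langle n\rangle^{2\gamma}$. With this in hand the weight $\langle n-k\rangle^{\alpha_1}\langle k\rangle^{\alpha_2}$ pulls out of the inner sum as $C\langle n\rangle^{2\gamma(\alpha_1+\alpha_2)}$, and Cauchy--Schwarz against the full $\ell^2$ hypotheses \eqref{5.4}--\eqref{5.5} then yields $\sum_n|a_n^{11}|^2\langle n\rangle^{-2\tau}<\infty$ for $2\tau\geqslant 4\gamma(\alpha_1+\alpha_2)+2\gamma+d+1$. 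The passage from ``$S_n$ has few points'' to ``every point of $S_n$ lies within distance $C|n|^{\gamma}$ of the origin'' is precisely the step your sketch is missing; without it the word ``should'' in your last paragraph is doing work that has not been justified.
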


We give the proof of Theorem \ref{t7} immediately after the following definition, which seems reasonable.
\begin{definition}\label{d1} It is said that $f_1$, $f_2\in\mathscr P^\prime$ have compatible coefficient estimates if \eqref{5.4}-\eqref{5.6} hold.
We say that $f_1$, $f_2\in\mathcal{D}^\prime(\R^d)$  have compatible coefficient estimates in a neighborhood of $x_0$ if for some $\varphi\in\mathcal{D}(\mathbb{T}_{\eta,x_0})$, $(f_1\varphi)_{per}$ and $(f_2\varphi)_{per}$ have Fourier expansions so that \eqref{5.4}-\eqref{5.6} hold.
 The sequences $(a_{1,k}^i)_{k\in\Z^d}$, $i=1,\ldots,l_1$, and $(a_{2,k}^j)_{k\in\Z^d}$, $j=1,\ldots,l_2$, are compatible if \eqref{5.4}-\eqref{5.6} hold.
\end{definition}

\begin{proof}[Proof of Theorem \ref{t7}] First, we note that if $c_{j,i}^2(n)=\card\{k\in\mathbb{Z}^d\,:\,n-k\in\Lambda_i^1\, \wedge\, k\in\Lambda_j^2\}$, then $c_{i,j}^1(n)=c_{j,i}^2(n)$.

The proof will be given for $l_1=l_2=1$. The transfer to the general case is just repetition of arguments which are to follow.
So,  we will cancel indexes $i$ and $j$. Thus, we have
\begin{align*}f_1f_2&=\Bigg(\sum_{k\in\Lambda^1}+\sum_{k\in\mathbb{Z}^d\setminus\Lambda^1}\Bigg)a_{1,k}e_k\cdot
\Bigg(\sum_{m\in\Lambda^2}+\sum_{m\in\mathbb{Z}^d\setminus\Lambda^2}\Bigg)a_{2,m}e_m\\&=f_1^1f_2^1+f_1^1f_2^2+f_1^2f_2^1+f_1^2f_2^2,\end{align*}
and assume that
$$2\tau\geqslant\max\big\{4\gamma(\alpha_1+\alpha_2)+2\gamma+d+1,2\alpha_1+d+1,2\alpha_2+d+1\big\}.$$

We will estimate separately all the summaries. We have
$$f_1^1f_2^1=\sum_{n\in\mathbb{Z}^d}a_n^{11}e_n,\quad\text{where}\quad a_n^{11}=\sum_{n-k\in\Lambda^1\atop k\in\Lambda^2}a_{1,n-k}a_{2,k}, \; n\in\mathbb{Z}^d.$$
There holds,
\begin{align*}&\sum_{n\in\mathbb{Z}^d}|a_n^{11}|^2\langle n\rangle^{-2\tau}
\leqslant\\&\leqslant\sum_{n\in\mathbb{Z}^d}\Bigg(\sum_{n-k\in\Lambda^1\atop k\in\Lambda^2}|a_{1,n-k}|\langle n-k\rangle^{-\alpha_1}|a_{2,k}|\langle k\rangle^{-\alpha_2}\cdot\langle n-k\rangle^{\alpha_1}\langle k\rangle^{\alpha_2}\Bigg)^2\langle n\rangle^{-2\tau}.\end{align*}
By \eqref{5.6}, for $k\in\Lambda^2$ and $(n-k)\in\Lambda^1$,
$$\langle n-k\rangle\leqslant\big\langle (n_1+|n|^\gamma,\ldots,n_d+|n|^\gamma)\big\rangle\leqslant C\langle n\rangle^{2\gamma},\quad
\langle k\rangle\leqslant C\langle n\rangle^{2\gamma}.$$
We continue,
\begin{align*}&\sum_{n\in\mathbb{Z}^d}|a_n^{11}|^2\langle n\rangle^{-2\tau}
\leqslant\\
 &\leqslant C\sum_{n\in\mathbb{Z}^d}\Bigg(\sum_{n-k\in\Lambda^1\atop k\in\Lambda^2}|a_{1,n-k}|\langle n-k\rangle^{-\alpha_1}|a_{2,k}|\langle k\rangle^{-\alpha_2}\Bigg)^2\frac{\langle n\rangle^{4\gamma(\alpha_1+\alpha_2)+2\gamma}}{\langle n\rangle^{2\tau}}\\
&\leqslant C\sum_{n\in\mathbb{Z}^d}\Bigg(\sum_{n-k\in\Lambda^1\atop k\in\Lambda^2}|a_{1,n-k}|^2\langle n-k\rangle^{-2\alpha_1}\Bigg)
\Bigg(\sum_{n-k\in\Lambda^1\atop k\in\Lambda^2}|a_{2,k}|^2\langle k\rangle^{-2\alpha_2}\Bigg)\frac1{\langle n\rangle^{d+1}}\\&\leqslant C\sum_{n\in\mathbb{Z}^d}\frac1{\langle n\rangle^{d+1}}<+\infty.
\end{align*}

Let us estimate
$$f_1^1f_2^2=\sum_{n\in\mathbb{Z}^d}a_n^{12}e_n,\quad \text{where}\quad
a_n^{12}=\sum_{n-k\in\Lambda^1\atop k\in\mathbb{Z}^d\setminus\Lambda^2}a_{1,n-k}a_{2,k},\quad n\in\mathbb{Z}^d.$$
There holds,
\begin{align*}&\sum_{n\in\mathbb{Z}^d}|a_n^{12}|^2\langle n\rangle^{-2\tau}
\leqslant\\
&\leqslant\sum_{n\in\mathbb{Z}^d}\Bigg(\sum_{n-k\in\Lambda^1\atop k\in\mathbb{Z}^d\setminus\Lambda^2}|a_{1,n-k}|\langle n-k\rangle^{-\alpha_1}|a_{2,k}|\langle k\rangle^{\beta_2}\cdot\frac{\langle n-k\rangle^{\alpha_1}}{\langle k\rangle^{\beta_2}}\Bigg)^2\langle n\rangle^{-2\tau}.
\end{align*}
Since, $\langle n-k\rangle^{\alpha_1}\leqslant C\langle n\rangle^{\alpha_1}\langle k\rangle^{\alpha_1}$ and $\beta_2\geqslant\alpha_1\geqslant0$, we have
$$\sum_{n\in\mathbb{Z}^d}|a_n^{12}|^2\langle n\rangle^{-2\tau}\leqslant C\sum_{n\in\mathbb{Z}^d}\frac1{\langle n\rangle^{2\tau-2\alpha_1}}<+\infty.$$

Further, we will use the inequality
\begin{equation}\label{ineq.}\langle y\rangle^r\leqslant C\langle x\rangle^r\langle y-x\rangle^{|r|},\quad x,y\in\R^d, \,r\in\R,
\end{equation} which we now show that hold. Indeed, since \[(1+t)^2=1+2t+t^2\leqslant2(1+t^2),\quad t\geqslant0,\] if we choose $t=|y-x|$ then we get
$$\langle y\rangle\leqslant\langle x\rangle+|y-x|\leqslant\langle x\rangle(1+|y-x|)\leqslant2^{1/2}\langle x\rangle\langle y-x\rangle.$$
Thus, for $r\geqslant0$ inequality \eqref{ineq.} holds. For $r<0$ we have
$$\frac{\langle y\rangle^r}{\langle x\rangle^r}=\frac{\langle x\rangle^{|r|}}{\langle y\rangle^{|r|}}\leqslant\frac{C\langle y\rangle^{|r|}\langle y-x\rangle^{|r|}}{\langle y\rangle^{|r|}}=C\langle y-x\rangle^{|r|}.$$
Hence, the inequality \eqref{ineq.} holds for every $r\in\R$.

Now, by \eqref{ineq.} for $k$, $n\in\Z^d$ and $\alpha_2\geqslant0$, we have that $\langle k\rangle^{\alpha_2}\leqslant C\langle k-n\rangle^{\alpha_2}\langle n\rangle^{\alpha_2}$ holds.
Using the last inequality and $\beta_1\geqslant\alpha_2\geqslant0$, the estimate for $f_1^2f_2^1$ simply follows:
\begin{align*}&\sum_{n\in\mathbb{Z}^d}|a_n^{21}|^2\langle n\rangle^{-2\tau}
\leqslant\\&\leqslant\sum_{n\in\mathbb{Z}^d}\Bigg(\sum_{n-k\in\Z^d\setminus\Lambda^1\atop k\in\Lambda^2}|a_{1,n-k}|\langle n-k\rangle^{\beta_1}|a_{2,k}|\langle k\rangle^{-\alpha_2}\cdot\frac{\langle k\rangle^{\alpha_2}}{\langle n-k\rangle^{\beta_1}}\Bigg)^2\langle n\rangle^{-2\tau}\\
&\leqslant C\sum_{n\in\mathbb{Z}^d}\Bigg(\sum_{n-k\in\Z^d\setminus\Lambda^1\atop k\in\Lambda^2}|a_{1,n-k}|^2\langle n-k\rangle^{2\beta_1}\Bigg)\Bigg(\sum_{n-k\in\Z^d\setminus\Lambda^1\atop k\in\Lambda^2}|a_{2,k}|^2\langle k\rangle^{-2\alpha_2}\Bigg)\frac{\langle n\rangle^{2\alpha_2}}{\langle n\rangle^{2\tau}}
\\&\leqslant C\sum_{n\in\Z^d}\frac1{\langle n\rangle^{d+1}}<+\infty.
\end{align*}
The estimate for $f_1^2f_2^2$ is proved in a similar way. Hence, $f_1f_2\in\mathscr P^{\prime\tau}$.
\end{proof}

\begin{theorem}\label{t8} Let $g_1\in V_{s_1}(\varphi_1^1,\ldots,\varphi_1^{l_1}),$ $g_2\in V_{s_2}(\varphi_2^1,\ldots,\varphi_2^{l_2})$, $s_1, s_2\geqslant0$, so that
$$g_1(\cdot)=\sum_{i=1}^{l_1}\sum_{k\in\mathbb{Z}^d}a^i_{1,k}\varphi^i_1(\cdot+k),\quad g_2(\cdot)=\sum_{j=1}^{l_2}\sum_{k\in\mathbb{Z}^d}a^j_{2,k}\varphi^j_2(\cdot+k),$$
and that there exist sets $\Lambda_i^1$, $i=1,\ldots,l_1$, and $\Lambda_j^2$, $j=1,\ldots,l_2,$ subsets of $\mathbb{Z}^d$ such that $\Lambda_i^1\cap(-\Lambda_j^2)=\emptyset$, $i=1,\ldots,l_1$, $j=1,\ldots,l_2$. Moreover, assume that \eqref{5.4} and \eqref{5.5} hold and that $c_{i,j}^1(n)$ $(c_{j,i}^2(n))$, $i=1,\ldots,l_1$, $j=1,\ldots,l_2$, satisfy \eqref{5.6}. Then, there exists $s\in\mathbb R$ such that for  $\varphi^i_1$, $\varphi^j_2\in V_s\cap\mathcal{V}_s^2$, $i=1,\ldots,l_1$, $j=1,\ldots,l_2$, 
we have
$$g_1*g_2\in V_s\big(\varphi^i_1*\varphi^j_2,\, i=1,\ldots,l_1,\, j=1,\ldots,l_2\big).$$
More precisely,
$$(g_1*g_2)(\cdot)=\sum_{i=1}^{l_1}\sum_{j=1}^{l_2}\sum_{n\in\Z^d}\sum_{n-k\in\Z^d}a_{1,n-k}^ia_{2,k}^j(\varphi_1^i*\varphi_2^j)(\cdot+n).$$
\end{theorem}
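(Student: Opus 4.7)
The strategy is to transfer the statement to the Fourier/periodic side, apply Theorem~\ref{t7} to the periodic factors of $\widehat{g_1}$ and $\widehat{g_2}$, and then return to the spatial side via $\widehat{f*g} = \widehat{f}\widehat{g}$ together with $\widehat{h(\cdot+n)}(t) = e_n(t)\widehat{h}(t)$. Concretely, for $k=1,2$ one writes
$$\widehat{g_k}(t) = \sum_{m} \widehat{\varphi_k^m}(t)\, F_k^m(t), \qquad F_k^m(t) := \sum_{l\in\Z^d} a_{k,l}^m e_l(t) \in \mathscr P',$$
so that
$$\widehat{g_1 * g_2}(t) = \widehat{g_1}(t)\,\widehat{g_2}(t) = \sum_{i=1}^{l_1}\sum_{j=1}^{l_2} \widehat{\varphi_1^i}(t)\,\widehat{\varphi_2^j}(t)\cdot F_1^i(t) F_2^j(t).$$
Thus the problem is reduced to controlling the finitely many periodic products $F_1^i F_2^j$.

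Next, each pair $(F_1^i, F_2^j)$ has Fourier coefficients satisfying the compatibility conditions \eqref{5.4}--\eqref{5.6} of Theorem~\ref{t7} (the disjointness hypothesis $\Lambda_i^1 \cap (-\Lambda_j^2) = \emptyset$ and the polynomial bound on $c_{i,j}^1$ being exactly the ones there). Applying Theorem~\ref{t7} to each of the $l_1 l_2$ pairs and taking $\tau \in \R$ larger than all resulting exponents yields
$$F_1^i(t)\, F_2^j(t) = \sum_{n\in\Z^d} b_n^{ij} e_n(t), \qquad b_n^{ij} = \sum_{k\in\Z^d} a_{1,n-k}^i\, a_{2,k}^j,$$
with the uniform estimate $\sum_{n\in\Z^d} |b_n^{ij}|^2 \langle n\rangle^{-2\tau} < +\infty$. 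Substituting this into the previous identity and invoking the two Fourier rules recalled above,
$$\widehat{g_1 * g_2}(t) = \sum_{i,j,n} b_n^{ij}\,e_n(t)\, \widehat{\varphi_1^i}(t)\widehat{\varphi_2^j}(t) = \sum_{i,j,n} b_n^{ij}\, \widehat{(\varphi_1^i * \varphi_2^j)(\cdot+n)}(t),$$
and inverting $\mathcal F$ termwise produces exactly the expansion stated in the conclusion.

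Finally, one identifies the resulting series as an element of a single shift-invariant space: take $s = -\tau$, so that the coefficient system $(b_n^{ij})_{n\in\Z^d}$ lies in $\ell^2_s$ with the weight $\langle n \rangle^{-2\tau}$ matching the formal-series description in Theorem~\ref{jed}(c). Under the hypothesis $\varphi_1^i * \varphi_2^j \in V_s \cap \mathcal V_s^2$ this identifies $\mathcal V^2_s = V_s$ with the shift-invariant space generated by the convolutions $\varphi_1^i * \varphi_2^j$, and the series $\sum_{i,j,n} b_n^{ij}(\varphi_1^i * \varphi_2^j)(\cdot+n)$ is a bona fide element of it. The two main obstacles I foresee are: (i) justifying the termwise manipulations of the double series in the topology of $\mathcal S'(\R^d)$ (this should be routine, since the inner $F_1^i F_2^j$ already converges in $\mathscr P'^\tau$ by Theorem~\ref{t7}, and multiplication by $\widehat{\varphi_1^i}\widehat{\varphi_2^j}$ preserves tempered convergence); and (ii) verifying that the hypothesis $\varphi_1^i,\varphi_2^j \in V_s \cap \mathcal V_s^2$ together with a Sobolev-type convolution estimate (to control $\widehat{\varphi_1^i}\widehat{\varphi_2^j}$ in an appropriate $L^2_s$) forces the convolutional generators $\varphi_1^i * \varphi_2^j$ into $V_s \cap \mathcal V_s^2$, so that Theorem~\ref{jed}(c) applies to them as generators.
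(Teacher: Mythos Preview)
Your proposal is correct and follows essentially the same route as the paper: pass to the Fourier side, factor $\widehat{g_k}$ as $\widehat{\varphi_k}\cdot(\text{periodic part})$, invoke Theorem~\ref{t7} on the periodic factors to obtain coefficients in $\ell^2_{-\tau}$, then set $s=-\tau$ and invert. The paper carries this out only for $l_1=l_2=1$ (declaring the general case a ``repetition of arguments''), whereas you treat the double sum over $i,j$ directly; the two concerns you flag in (i) and (ii) are not addressed in the paper's proof either, so your version is, if anything, more scrupulous than the original.
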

\begin{proof} Again, we discuss only the case $l_1=l_2=1$ and cancel indices $i$ and $j.$ We have
$$\widehat{g}_1(x)=\widehat{\varphi}_1(x)f_1(x),\quad \widehat{g}_2(x)=\widehat{\varphi}_2(x)f_2(x),\quad x\in\mathbb{R}^d,$$
where $f_1$ and $f_2$ are the same as in Theorem \ref{t7}. Thus,
$$\widehat{g}_1(x)\widehat{g}_2(x)=\widehat{\varphi}_1(x)\widehat{\varphi}_2(x)\sum_{n\in\mathbb{Z}^d}a_ne_n(x),\quad x\in\R^d,$$
and coefficients $a_n$, $n\in\mathbb Z^d$ satisfy
 $$\sum_{n\in\mathbb{Z}^d}|a_n|^2\langle n\rangle^{-2\tau}<+\infty,$$ by Theorem \ref{t7}. This implies
$$(g_1*g_2)(t)=(\varphi_1*\varphi_2)(t)*\sum_{n\in\mathbb{Z}^d}a_n\delta(t+n)=\sum_{n\in\mathbb{Z}^d}a_n(\varphi_1*\varphi_2)(t+n),\quad t\in\R^d.$$
Let $s=-\tau$. Hence, $g_1*g_2\in V_s(\varphi_1*\varphi_2)$.

The general case is again the repetition of the given proof but with much more complex notation which we skip.
\end{proof}

\section{Wave front characterizations}

 We analyze the product of distributions considering them in the space of periodic distributions and then we transfer the obtained results to the shift-invariant spaces $V_s.$
 We recall H\"ormander's definition \cite{Hor2}.
\begin{definition}[\cite{Hor2}]
Let $f\in\mathscr{D}'(\mathbb{R}^{d})$, $(x_{0},\xi_0)\in\mathbb{R}^{d}\times(\R^d\setminus\{\bf{0}\})$, and $s\in\mathbb{R}$. We say that $f$ is Sobolev microlocally
regular at $(x_0,\xi_0)$ of order $s$, that is $(x_0,\xi_0)\notin WF_s(f)$, if there exist an open cone $\Gamma$ around $\xi_0$ and $\psi\in \mathscr D(\mathbb{R}^{d})$ with $\psi\equiv 1$ in a neighborhood of $x_{0}$ such that
$$\int_\Gamma{|\widehat{\psi f}(\xi)|^2\langle \xi\rangle^{2s} \D\xi<+\infty}.$$
\end{definition}

The next theorem is  the characterization through the localization and the representation through the Fourier coefficients.
 \begin{theorem}[\cite{MPSV}]\label{sobwf}
Let $f \in \mathscr{D}^\prime(\R^d)$. The following two conditions are equivalent.
\begin{itemize}
\item[a)] There exist an open cone $\Gamma$ around $\xi_0$, $\phi\in \mathscr D(\mathbb T_{\eta,x_{0}})$ with $\eta\in(0,1)$ and $\phi\equiv 1$ in a neighborhood of $x_{0}$, such that
$$\sum_{n\in\Gamma\cap\Z^d}{|a_n|^2\langle n\rangle^{2s}}<+\infty,\quad \mbox{where }\;(f\phi)_{per}=\sum_{n\in\mathbb{Z}^{d}} a_n e_n.$$
\item[b)] $(x_0,\xi_0)\notin WF_s(f).$
\end{itemize}
\end{theorem}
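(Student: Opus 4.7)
The plan is to reduce both implications to a sampling-type comparison between integer-point sums and integrals of $|\widehat{\phi f}(\xi)|^2\langle\xi\rangle^{2s}$ over cones. The starting observation, which I would establish first, is the identification of the Fourier series coefficients with samples of the continuous Fourier transform. Since $\phi\in\mathscr D(\mathbb T_{\eta,x_0})$ with $\eta<1$, the distribution $\phi f$ has compact support inside a cube of side $\eta<1$, so the translates $(\phi f)(\cdot-k)$, $k\in\mathbb Z^d$, are pairwise disjointly supported. Consequently, on any unit cube containing $\supp(\phi f)$, the periodization $(\phi f)_{per}$ coincides with $\phi f$, and the definition of Fourier coefficients gives
$$a_n=\int_{\mathbb T_{1,y}}(\phi f)_{per}(t)\,e^{-2\pi\sqrt{-1}\langle n,t\rangle}\,\mathrm dt=\widehat{\phi f}(n),\quad n\in\mathbb Z^d.$$

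For the implication $(b)\Rightarrow(a)$, I would start from a cutoff $\psi$ with $\psi\equiv1$ near $x_0$ and an open cone $\Gamma\ni\xi_0$ realizing the integral condition. Choose $\phi\in\mathscr D(\mathbb T_{\eta,x_0})$ with $\phi\equiv1$ on a smaller neighborhood of $x_0$ and $\supp\phi\subset\{\psi\equiv 1\}$, so that $\phi f=\phi\,(\psi f)$ and hence $\widehat{\phi f}=\widehat\phi *\widehat{\psi f}$, with $\widehat\phi\in\mathcal S(\R^d)$. A standard convolution estimate—splitting the convolution according to whether the inner variable lies in $\Gamma$ or not, using Peetre's inequality $\langle\xi\rangle^s\leqslant C\langle\xi-\eta\rangle^{|s|}\langle\eta\rangle^s$ together with the Schwartz decay of $\widehat\phi$ to annihilate the non-cone part—shows $\int_{\Gamma_1}|\widehat{\phi f}(\xi)|^2\langle\xi\rangle^{2s}\,\mathrm d\xi<+\infty$ for any cone $\Gamma_1\Subset\Gamma$. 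To pass from the integral to the sum, I would exploit that $\phi f\in\mathcal E'(\R^d)$ has support in a cube of side $<1$, so that $\widehat{\phi f}$ is entire of exponential type $<\pi$ in each variable. Plancherel--Polya (or Bernstein's inequality for band-limited functions) then yields a pointwise control $|\widehat{\phi f}(n)|^2\leqslant C\int_{|\xi-n|<r}|\widehat{\phi f}(\xi)|^2\,\mathrm d\xi$ for some small $r$. Summing over $n$ in a still smaller cone $\Gamma'\Subset\Gamma_1$, chosen so that $\bigcup_{n\in\Gamma'}B(n,r)\subset\Gamma_1$, and using $\langle n\rangle\asymp\langle\xi\rangle$ when $|\xi-n|<r$, produces the desired finiteness of the weighted sum.

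For $(a)\Rightarrow(b)$, I would take $\psi=\phi$ directly and run Plancherel--Polya in the opposite direction: for the same band-limited $\widehat{\phi f}$, one has the dual bound $\int_{|\xi-n|<r}|\widehat{\phi f}(\xi)|^2\,\mathrm d\xi\leqslant C|\widehat{\phi f}(n)|^2$. Summing over $n$ in the given cone $\Gamma$ and choosing $\Gamma'\Subset\Gamma$ such that $\bigcup_{n\in\Gamma\cap\mathbb Z^d}B(n,r)\supset\Gamma'$, the weighted integral over $\Gamma'$ is controlled by the given sum, producing an admissible pair $(\psi,\Gamma')$ witnessing $(x_0,\xi_0)\notin WF_s(f)$.

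The main technical obstacle is the quantitative passage between integer samples and the continuous values of $\widehat{\phi f}$ in both directions, which must respect the cone structure; this is where one invokes Plancherel--Polya/Bernstein and must nest the cones $\Gamma'\Subset\Gamma_1\Subset\Gamma$ carefully so that the ball slack of radius $r$ around each integer point lies in the next larger cone. The rest—convolution stability of the weighted cone condition, and the identification of $a_n$ with $\widehat{\phi f}(n)$—is routine once the support of $\phi$ is chosen inside a cube of side $<1$.
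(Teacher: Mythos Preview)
The paper does not prove Theorem~\ref{sobwf}; it is quoted from \cite{MPSV} without argument, so there is no paper proof to compare against directly. The closest the paper comes is Theorem~\ref{t6*}, which is essentially the implication $(a)\Rightarrow(b)$ for periodic $f$, and whose proof proceeds via the reconstruction formula $\widehat{\varphi f}(\xi)=\sum_{n}a_n\widehat\varphi(\xi+n)$ together with a cone/complement splitting and Peetre's inequality \eqref{ineq.}---not via any local dual sampling bound.

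Your $(b)\Rightarrow(a)$ outline is sound; in fact the detour through the continuous integral over $\Gamma_1$ is unnecessary, since one may evaluate $a_n=(\widehat\phi*\widehat{\psi f})(n)$ directly and run the same convolution splitting pointwise at integers. Your $(a)\Rightarrow(b)$ argument, however, contains a genuine gap: the claimed ``dual Plancherel--Polya'' bound
\[
\int_{|\xi-n|<r}\bigl|\widehat{\phi f}(\xi)\bigr|^2\,\D\xi\;\leqslant\; C\,\bigl|\widehat{\phi f}(n)\bigr|^2
\]
is false. Take $d=1$ and $\phi f=\delta_{1/4}-\delta_{-1/4}\in\mathcal E'(\mathbb T_\eta)$; then $\widehat{\phi f}(\xi)=-2i\sin(\pi\xi/2)$ vanishes at every even integer while the integral over any ball around such a point is strictly positive. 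No local inequality of this form can hold, because an entire function of exponential type may vanish at a prescribed lattice point without vanishing nearby. The repair is to use the interpolation identity instead: pick $\chi\in\mathscr D$ with $\chi\equiv1$ on $\supp(\phi f)$ and $\supp\chi$ contained in a unit cube, so that $\phi f=\chi\,(\phi f)_{per}$ and hence
\[
\widehat{\phi f}(\xi)=\sum_{n\in\mathbb Z^d}a_n\,\widehat\chi(\xi-n).
\]
One then bounds $\int_{\Gamma'}\langle\xi\rangle^{2s}|\widehat{\phi f}(\xi)|^2\,\D\xi$ for $\Gamma'\subset\subset\Gamma$ by splitting the inner sum into $n\in\Gamma$ (controlled by the hypothesis and Peetre's inequality) and $n\notin\Gamma$ (controlled by the a~priori polynomial bound on $(a_n)$, the rapid decay of $\widehat\chi$, and the separation estimate $\langle\xi-n\rangle\geqslant c\langle n\rangle$ for $\xi\in\Gamma'$, $n\notin\Gamma$). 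This is exactly the mechanism in the paper's proof of Theorem~\ref{t6*}.
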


The next assertion is interesting in itself.
\begin{theorem}\label{t6*} Let $\Gamma$ be an open convex cone in $\R^d\setminus\{\mathbf{0}\}$ and $f=\sum_{n\in\Z^d}a_ne_n\in\mathscr P^\prime$, so that $\sum_{n\in\Gamma\cap\Z^d}|a_n|^2\langle n\rangle^{2s}<+\infty$. Then $(x_0,\xi_0)\notin WF_s(f)$ for any $x_0\in\R^d$ and $\xi_0\in\Gamma$.
\end{theorem}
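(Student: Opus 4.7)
By Theorem~\ref{sobwf}, the plan is to construct an open cone $\Gamma'$ around $\xi_0$ and a cutoff $\phi\in\mathscr D(\mathbb T_{\eta,x_0})$, for some $\eta\in(0,1)$, with $\phi\equiv 1$ in a neighbourhood of $x_0$, such that the Fourier coefficients $b_k$ of $(f\phi)_{per}$ satisfy $\sum_{k\in\Gamma'\cap\Z^d}|b_k|^2\langle k\rangle^{2s}<+\infty$. Since $\Gamma$ is open and $\xi_0\in\Gamma$, one may choose $\Gamma'$ so that $\overline{\Gamma'}\setminus\{0\}\subset\Gamma$; the compact sets $\overline{\Gamma'}\cap\mathbb S^{d-1}$ and $\mathbb S^{d-1}\setminus\Gamma$ are then disjoint, so the angle between them is bounded below by some $\theta_0>0$. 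A short law-of-cosines calculation, using $u^2+v^2-2\cos\theta_0\,uv\geqslant(1-\cos\theta_0)(u^2+v^2)$, then yields the key geometric separation
\[
|k-n|\geqslant c_0(|k|+|n|),\qquad k\in\Gamma'\cap\Z^d,\ n\in\Z^d\setminus\Gamma,
\]
for some constant $c_0>0$ (the case $n=0$ being trivial).

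Unwinding the definitions gives $b_k=\sum_{n\in\Z^d}a_n\widehat{\phi}(k-n)$, $k\in\Z^d$; the series converges absolutely thanks to the polynomial bound $|a_n|\leqslant C\langle n\rangle^{k_0}$ (which comes from $f\in\mathscr P'$) and the Schwartz decay of $\widehat\phi$. I would split $b_k=b_k^{(1)}+b_k^{(2)}$, with $b_k^{(1)}$ collecting the terms indexed by $n\in\Gamma\cap\Z^d$ and $b_k^{(2)}$ the rest, and handle the two pieces independently.

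For $b_k^{(1)}$, Peetre's inequality \eqref{ineq.} gives
\[
\langle k\rangle^s|b_k^{(1)}|\leqslant C\sum_{n\in\Gamma\cap\Z^d}\bigl(|a_n|\langle n\rangle^s\bigr)\bigl(\langle k-n\rangle^{|s|}|\widehat\phi(k-n)|\bigr),
\]
the discrete convolution of an $\ell^2(\Z^d)$-sequence (by the hypothesis on $\Gamma$) with an $\ell^1(\Z^d)$-sequence (since $\widehat\phi\in\mathcal S(\R^d)$), so Young's inequality yields $\sum_{k\in\Z^d}|b_k^{(1)}|^2\langle k\rangle^{2s}<+\infty$ globally in $k$. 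For $b_k^{(2)}$ with $k\in\Gamma'\cap\Z^d$, the above separation together with the rapid decay $|\widehat\phi(k-n)|\leqslant C_N\langle k-n\rangle^{-N}$ absorbs the polynomial growth of $|a_n|$ (spending part of $\langle k-n\rangle^{-N}$ on $\langle n\rangle^{-k_0-d-1}$) and leaves an arbitrarily large negative power of $\langle k\rangle$, giving $|b_k^{(2)}|\leqslant C_M\langle k\rangle^{-M}$ uniformly for $k\in\Gamma'$ and any $M$. Summing completes the estimate, and Theorem~\ref{sobwf} finishes the argument.

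The main obstacle I expect is the geometric separation $|k-n|\geqslant c_0(|k|+|n|)$ on $(\Gamma'\cap\Z^d)\times(\Z^d\setminus\Gamma)$: it is the only place where the microlocal structure genuinely enters, and it uses the openness of $\Gamma$ together with the strict inclusion $\overline{\Gamma'}\setminus\{0\}\subset\Gamma$ (convexity of $\Gamma$ itself is not actually needed). Once this separation is in hand, the rest of the argument is a routine interplay of Schwartz decay against polynomial growth.
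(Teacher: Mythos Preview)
Your argument is correct and follows the same core strategy as the paper: choose a subcone compactly contained in $\Gamma$, establish a geometric separation between it and the complement of $\Gamma$, then split the sum over $n$ into the part inside $\Gamma$ (controlled by the hypothesis via Peetre and a convolution estimate) and the part outside (controlled by the separation together with the Schwartz decay of the cutoff's transform against the polynomial growth coming from $f\in\mathscr P'$). The one genuine difference is that you route through Theorem~\ref{sobwf} and work with the discrete Fourier coefficients $b_k=\sum_n a_n\widehat{\phi}(k-n)$ of $(f\phi)_{per}$, whereas the paper estimates the continuous integral $\int_{\Gamma_1}\langle\xi\rangle^{2s}|\widehat{\varphi f}(\xi)|^2\,\D\xi$ directly from H\"ormander's definition; this buys you slightly cleaner bookkeeping (a discrete Young inequality in place of the paper's mixed integral--sum Cauchy--Schwarz step), and your closing remark is right: convexity of $\Gamma$ is never used in either proof, only openness.
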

\begin{proof} Let $\varphi\in\mathcal{D}(\mathbb{T}_{\eta,x_0})$, $\varphi\equiv1$ in $\mathbb{T}_{\varepsilon,x_0}$, $0<\varepsilon<\eta$. We know that $\widehat{\varphi}\in\mathcal{S}(\R^d)$. Let $\Gamma_{\xi_0}\subset\Gamma$ and $\Gamma_1\subset\subset\Gamma_{\xi_0}$ (that is, $\Gamma_1\cap\mathbb{S}^{d-1}$ is a compact subset of $\Gamma_{\xi_0}\cap\mathbb{S}^{d-1}$, where $\mathbb{S}^{d-1}$ is the unit sphere). Then there exists $C>0$ such that
\begin{equation}\label{1*}\xi\in\Gamma_1\quad\wedge\quad n\in\Z^d\cap\big((\R^d\setminus\{\mathbf{0}\})\setminus\Gamma_{\xi_0}\big)\quad\Rightarrow\quad\langle\xi-n\rangle\geqslant C\langle n\rangle.
\end{equation}
We have, by \eqref{ineq.} (with $y=\xi$, $x=-n$, $r=2s$), 
\begin{align*}\int_{\Gamma_1}\langle\xi\rangle^{2s}|\widehat{\varphi f}(\xi)|^2\D\xi
&=\int_{\Gamma_1}\langle\xi\rangle^{2s}|(\widehat{\varphi}*\widehat{f})(\xi)|^2\D\xi\\
&=\int_{\Gamma_1}\langle\xi\rangle^{2s}\Big|\widehat{\varphi}(\xi)*\sum_{n\in\Z^d}a_n\delta(\xi+n)\Big|^2\D\xi\\
&=\int_{\Gamma_1}\langle\xi\rangle^{2s}\Big|\sum_{n\in\Z^d}a_n\widehat{\varphi}(\xi+n)\Big|^2\D\xi\\
&\leqslant\int_{\Gamma_1}\langle\xi\rangle^{2s}\Bigg(\sum_{n\in\Z^d}|a_n|^2|\widehat{\varphi}(\xi+n)|\Bigg)\Bigg(\sum_{n\in\Z^d}|\widehat{\varphi}(\xi+n)|\Bigg)\D\xi\\
&\leqslant C\int_{\Gamma_1}\Bigg(\sum_{n\in\Z^d}|a_n|^2\langle n\rangle^{2s}|\widehat{\varphi}(\xi+n)|\langle\xi+n\rangle^{2|s|}\Bigg)\D\xi\\
&=C\cdot I,
\end{align*}
where we have used that $\sum_{n\in\Z^d}|\widehat{\varphi}(\xi+n)|<+\infty$, $\xi\in\R^d$, because $\widehat{\varphi}\in\mathcal{S}(\R^d)$.
Further on,
\begin{align*}I&\leqslant\int_{\Gamma_1}\sum_{n\in\Z^d\cap\Gamma_{\xi_0}}|a_n|^2\langle n\rangle^{2s}|\widehat{\varphi}(\xi+n)|\langle\xi+n\rangle^{2|s|}\D\xi\\&\qquad+\int_{\Gamma_1}\sum_{n\in\Z^d\setminus\Gamma_{\xi_0}}|a_n|^2\langle n\rangle^{2s}|\widehat{\varphi}(\xi+n)|\langle\xi+n\rangle^{2|s|}\D\xi\\&=I_1+I_2.\end{align*}
For $I_1$, we have
\begin{align*}I_1&=\sum_{n\in\Z^d\cap\Gamma_{\xi_0}}|a_n|^2\langle n\rangle^{2s}\int_{\Gamma_1}|\widehat{\varphi}(\xi+n)|\langle\xi+n\rangle^{2|s|}\D\xi
\\&\leqslant C\sum_{n\in\Z^d\cap\Gamma_{\xi_0}}|a_n|^2\langle n\rangle^{2s}<+\infty,\end{align*}
since $\int_{\R^d}|\widehat{\varphi}(\xi+n)|\langle\xi+n\rangle^{2|s|}\D\xi\leqslant C$, $n\in\Z^d$, also because $\widehat{\varphi}\in\mathcal{S}(\R^d)$.
Concerning $I_2$, we note that $f\in\mathscr P^\prime$ implies $\sum_{n\in\Z^d}|a_n|^2\langle n\rangle^{-2m}<+\infty$ for some $m\in\mathbb{N}$. By \eqref{1*}, we have
$$\frac{\langle n\rangle^{2(m+s)}}{\langle\xi+n\rangle^{2(m+s)}}\leqslant C,\quad\xi\in\Gamma_1, \, n\in\Z^d\setminus\Gamma_{\xi_0}.$$
So
\begin{align*}I_2&=\int_{\Gamma_1}\sum_{n\in\Z^d\setminus\Gamma_{\xi_0}}|a_n|^2\langle n\rangle^{-2m}\frac{\langle n\rangle^{2(m+s)}}{\langle\xi+n\rangle^{2(m+s)}}\langle\xi+n\rangle^{2(m+s+|s|)}|\widehat{\varphi}(\xi+n)|\D\xi\\
&\leqslant C\sum_{n\in\Z^d\setminus\Gamma_{\xi_0}}|a_n|^2\langle n\rangle^{-2m}\int_{\Gamma_1}\langle\xi+n\rangle^{2(m+s+|s|)}|\widehat{\varphi}(\xi+n)|\D\xi<+\infty,
\end{align*}
where we used that $\int_{\R^d}\langle\xi+n\rangle^{2(m+s+|s|)}|\widehat{\varphi}(\xi+n)|\D\xi<+\infty$. This completes the proof.
\end{proof}

 We have the following corollary.
 \begin{corollary} Let $\varphi\in\mathcal{D}(\R^d)$ and $h\in V_s(\varphi)$, so that
 $$h(\cdot)=\sum_{k\in\Z^d}a_k\varphi(\cdot+k)\quad\text{and}\quad \sum_{k\in\Z^d\cap\Gamma}|a_k|^2\langle n\rangle^{2s}<+\infty$$
 for an open cone $\Gamma\subset\R^d\setminus\{\mathbf{0}\}$. If $\sum_{k\in\Z^d}|a_k|^2\langle k\rangle^{-2s_0}<+\infty$ for some $s_0\in\mathbb{N}$, then for every $x\in\Omega$ $(\Omega\subset\R^d$ is open set$)$ and $\xi\in\Gamma$, $(x,\xi)\notin WF_s(\widehat{h})$.
 \end{corollary}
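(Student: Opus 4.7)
First, I would write $\widehat h(\xi)=\widehat\varphi(\xi)F(\xi)$ with $F(\xi):=\sum_{k\in\Z^d}a_ke_k(\xi)$. The hypothesis $\sum_{k\in\Z^d}|a_k|^2\langle k\rangle^{-2s_0}<+\infty$ makes $F\in\mathscr P'^{s_0}\subseteq\mathscr P'$ a periodic distribution of finite order. For any fixed $\xi_0\in\Gamma$, I would pick an open convex subcone $\widetilde\Gamma\subseteq\Gamma$ with $\xi_0\in\widetilde\Gamma$; the coefficient hypothesis restricts trivially to $\sum_{k\in\widetilde\Gamma\cap\Z^d}|a_k|^2\langle k\rangle^{2s}<+\infty$, so Theorem \ref{t6*} (applied to $F$ and $\widetilde\Gamma$) gives $(y,\eta)\notin WF_s(F)$ for every $y\in\R^d$ and every $\eta\in\widetilde\Gamma$. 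Since every direction in $\Gamma$ lies in some such convex subcone, this shows $(y,\xi_0)\notin WF_s(F)$ for every $y\in\R^d$ and every $\xi_0\in\Gamma$.

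Next, I would transfer the regularity to $\widehat h=\widehat\varphi F$. Fix $x\in\Omega$, $\xi_0\in\Gamma$, take a cutoff $\psi\in\mathcal D(\R^d)$ with $\psi\equiv 1$ near $x$, and set $\Psi:=\psi\widehat\varphi$. Since $\varphi\in\mathcal D(\R^d)$ gives $\widehat\varphi\in\mathcal S(\R^d)$, the product $\Psi$ belongs to $\mathcal D(\R^d)$. Then $\psi\widehat h=\Psi F$ and, exactly as in the proof of Theorem \ref{t6*},
$$\widehat{\psi\widehat h}(\xi)=\widehat{\Psi F}(\xi)=\sum_{k\in\Z^d}a_k\widehat\Psi(\xi+k).$$
To conclude $(x,\xi_0)\notin WF_s(\widehat h)$ it then suffices to bound $\int_{\Gamma_1}|\widehat{\psi\widehat h}(\xi)|^2\langle\xi\rangle^{2s}\D\xi$ on some open cone $\Gamma_1$ around $\xi_0$ with $\Gamma_1\cap\mathbb S^{d-1}\subset\subset\widetilde\Gamma\cap\mathbb S^{d-1}$.

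This integral has the same shape as the integrals $I_1$, $I_2$ estimated in the proof of Theorem \ref{t6*}. The recipe is: apply Cauchy--Schwarz to the sum over $k$, use the amalgam-type bound $\sum_k|\widehat\Psi(\xi+k)|\leqslant C$ (valid because $\widehat\Psi\in\mathcal S(\R^d)$) together with \eqref{ineq.} to reduce to $\int_{\Gamma_1}\sum_{k}|a_k|^2\langle k\rangle^{2s}|\widehat\Psi(\xi+k)|\langle\xi+k\rangle^{2|s|}\D\xi$, and then split the sum into $k\in\widetilde\Gamma$ (handled by the cone coefficient hypothesis and $\int_{\R^d}|\widehat\Psi(\xi+k)|\langle\xi+k\rangle^{2|s|}\D\xi\leqslant C$) and $k\notin\widetilde\Gamma$ (handled by the separation \eqref{1*} to transfer the weight $\langle k\rangle^{2(s+s_0)}$ onto $\langle\xi+k\rangle^{2(s+s_0)}$, after which $\sum_k|a_k|^2\langle k\rangle^{-2s_0}<+\infty$ and the Schwartz decay of $\widehat\Psi$ close the estimate). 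The main obstacle is a minor bookkeeping check: one must observe that the computation of Theorem \ref{t6*} uses only $\widehat\varphi\in\mathcal S(\R^d)$ and not the normalization $\varphi\equiv 1$ near $x_0$, so the estimates apply verbatim to the Schwartz cutoff $\Psi=\psi\widehat\varphi$ in place of the original cutoff.
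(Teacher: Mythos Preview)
Your proof is correct and shares the paper's core idea: write $\widehat h=\widehat\varphi\,h_0$ with $h_0=\sum_k a_k e_k\in\mathscr P'$ and invoke Theorem~\ref{t6*} for $h_0$. The paper then finishes in one line by appealing to the standard fact that multiplication by a function in $\mathcal S(\mathbb R^d)$ (here $\widehat\varphi$) does not decrease the set of Sobolev microlocally regular points, so $WF_s(\widehat h)\subset WF_s(h_0)$. You instead absorb $\widehat\varphi$ into the cutoff by setting $\Psi=\psi\widehat\varphi\in\mathcal D$ and re-running the $I_1$/$I_2$ estimates of Theorem~\ref{t6*} with $\widehat\Psi$ in place of $\widehat\varphi$; your key observation that those estimates use only $\widehat\varphi\in\mathcal S$, not the normalization $\varphi\equiv 1$ near $x_0$, is exactly what makes this legitimate. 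This is more hands-on but fully self-contained, whereas the paper's route is shorter but relies on an (admittedly well-known) microlocal black box. One minor remark: once you commit to the direct computation with $\Psi$, the conclusion $(y,\xi_0)\notin WF_s(F)$ from your first paragraph is never actually used---only the convex-subcone and separation setup survive into the second paragraph.
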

 \begin{proof} Let $h_0=\sum_{k\in\Z^d}a_ke_k$. Then, $\widehat{h}=\widehat{\varphi}h_0$. We know by Theorem \ref{t6*} that for any $x\in\R^d$ and $\xi\in\Gamma$, $(x,\xi)\notin WF_s(h_0)$. Since the multiplication by a function in $\mathcal{S}(\R^d)$ does not decrease the set of Sobolev microlocal regular points, we conclude that $(x,\xi)\notin WF_s(\widehat{h})$.
 \end{proof}

Next, we consider the case when Lambdas are the sets of intersections of cone and $\Z^d$, where the cones are such that the projection of the wave front on the second variable is contained in them. This is an interesting case. Thus, we will take $$\Lambda^1=\Gamma_1\cap\Z^d\quad\text{and}\quad\Lambda^2=\Gamma_2\cap\Z^d,$$
so that $pr_2\big(WF_{s_1}(f_1)\big)\subset\Gamma_1$, $pr_2\big(WF_{s_2}(f_2)\big)\subset\Gamma_2$, where $pr_2(x,\xi)=\xi$, $x$, $\xi\in\R^d$. 

\begin{theorem}\label{t9} Let $f_1, f_2\in\mathscr{P}^\prime$ $($i.e. $f_1\in\mathscr P^{\prime\tau_1}$, $f_2\in\mathscr P^{\prime\tau_2})$, $\Gamma_1$ and $\Gamma_2$ be  cones of $\R^d$ so that $\Gamma_1\cap(-\Gamma_2)=\emptyset$ and that the following conditions be fulfilled.
\begin{itemize}
\item[a)] There exist $C>0$ and $\gamma\geqslant1$ such that
$$\card\{k\in\mathbb{Z}^d\,:\, n-k\in\Gamma_1 \, \wedge\, k\in\Gamma_2\}\leqslant C|n|^{\gamma}, \quad n\in\Z ^d.$$
\item[b)] Let $(x_0,\xi_0)\in\mathbb{R}^d\times(\mathbb{R}^d\setminus\{\mathbf{0}\})$ and let $\psi\in\mathcal{D}(\mathbb{T}_{\eta,x_0})$ with $\eta\in(0,1)$ and $\psi\equiv1$ in $\mathbb{T}_{\varepsilon,x_0}$, $\varepsilon<\eta$, so that
$$pr_2\big(WF_{s_1}(f_1\psi)\big)\subset\Gamma_1,\quad pr_2\big(WF_{s_2}(f_2\psi)\big)\subset\Gamma_2,$$
where $s_1\geqslant\tau_2$ and $s_2\geqslant\tau_1$.
\end{itemize}
Then, $f=(f_1\psi)_{per}(f_2\psi)_{per}$ exists in $\mathcal{D}^\prime(\R^d)$. Moreover, $f\in\mathscr{P}^\prime$.
\end{theorem}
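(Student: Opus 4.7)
\medskip

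\noindent\textbf{Proof plan.} The plan is to set $g_i:=(f_i\psi)_{per}$, $i=1,2$, and reduce to Theorem \ref{t7} with $\Lambda^1=\Gamma_1\cap\Z^d$, $\Lambda^2=\Gamma_2\cap\Z^d$, $\alpha_i=\tau_i$ and $\beta_i=s_i$. The disjointness $\Lambda^1\cap(-\Lambda^2)=\emptyset$ is immediate from $\Gamma_1\cap(-\Gamma_2)=\emptyset$, the cardinality estimate \eqref{5.6} is exactly hypothesis a), and the compatibility $\beta_i\ge\alpha_{3-i}\ge 0$ reduces to the inequalities $s_1\ge\tau_2,\ s_2\ge\tau_1$ of hypothesis b). The nontrivial task is to verify the splitting estimates \eqref{5.4}--\eqref{5.5} for $g_1$ and $g_2$.

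For the first inequalities in \eqref{5.4} and \eqref{5.5}, I would write the Fourier coefficients of $g_i$ as a discrete convolution
\[ a_{i,n}=\sum_{k\in\Z^d} f_{i,k}\,\widehat\psi(n-k),\qquad n\in\Z^d, \]
and combine Peetre's inequality \eqref{ineq.} (applied with $r=-\tau_i$, so that $\langle n\rangle^{-\tau_i}\leqslant C\langle k\rangle^{-\tau_i}\langle n-k\rangle^{\tau_i}$) with Young's convolution estimate $\ell^2*\ell^1\hookrightarrow\ell^2$. Since $(f_{i,k})_{k\in\Z^d}\in\ell^2_{-\tau_i}$ by $f_i\in\mathscr P^{\prime\tau_i}$, and $\widehat\psi\in\mathcal S(\R^d)$ forces $(\langle k\rangle^{\tau_i}|\widehat\psi(k)|)_{k\in\Z^d}\in\ell^1(\Z^d)$, this yields $(a_{i,n})_{n\in\Z^d}\in\ell^2_{-\tau_i}$. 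Hence $g_i\in\mathscr P^{\prime\tau_i}$, and in particular $\sum_{k\in\Lambda^i}|a_{i,k}|^2\langle k\rangle^{-2\tau_i}<+\infty$ with $\alpha_i=\tau_i$.

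For the second inequalities I would use the wave front hypothesis. Fix $\xi_0\in\R^d\setminus\overline{\Gamma_1}$, $\xi_0\ne 0$. Since $(x_0,\xi_0)\notin WF_{s_1}(f_1\psi)$, Theorem \ref{sobwf} supplies an open cone around $\xi_0$ together with some cutoff in $\mathcal D(\mathbb T_{\eta',x_0})$ for which the associated periodic Fourier coefficients enjoy the required $\ell^2\langle\cdot\rangle^{s_1}$-control. Invoking the standard microlocal invariance of the cutoff (any $\phi\in\mathcal D$ with $\phi\equiv 1$ in a neighbourhood of $x_0$ may be used in Theorem \ref{sobwf}, at the cost of slightly shrinking the cone), I may take $\phi\equiv 1$ on $\supp\psi$, so that $\phi\cdot f_1\psi=f_1\psi$ and the Fourier coefficients of $(\phi\cdot f_1\psi)_{per}$ coincide with $a_{1,n}$. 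This produces an open conic neighbourhood $V(\xi_0)$ of $\xi_0$ with $\sum_{n\in V(\xi_0)\cap\Z^d}|a_{1,n}|^2\langle n\rangle^{2s_1}<+\infty$. As $\Gamma_1$ is open, $\mathbb S^{d-1}\setminus\Gamma_1$ is compact; a finite subcover $\{V(\xi_0^j)\}_{j=1}^N$ gives $\R^d\setminus\Gamma_1\subset\bigcup_j V(\xi_0^j)$, and summing the $N$ resulting bounds yields
\[ \sum_{n\in\Z^d\setminus\Gamma_1}|a_{1,n}|^2\langle n\rangle^{2s_1}<+\infty, \]
which is \eqref{5.4} with $\beta_1=s_1$. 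The same argument applied to $f_2\psi$ delivers $\beta_2=s_2$ in \eqref{5.5}, and Theorem \ref{t7} then supplies $\tau\in\R$ with $g_1g_2\in\mathscr P^{\prime\tau}\subset\mathscr P'\subset\mathcal D'(\R^d)$, as required.

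The main obstacle is precisely the microlocal invariance used above: Theorem \ref{sobwf} as stated is only existential in the cutoff, whereas the compactness/covering argument needs one $\phi$ working uniformly for all $\xi_0\notin\Gamma_1$. The needed independence of the cutoff choice is standard in microlocal analysis — it amounts to checking that multiplication of $\phi f$ by a Schwartz factor which equals $1$ near $x_0$ only convolves $\widehat{\phi f}$ with a rapidly decaying kernel, and therefore cannot spoil conic $L^2\langle\cdot\rangle^s$-decay — but since it is not explicit in Theorem \ref{sobwf}, the proof must either spell it out or invoke \cite{Hor2}.
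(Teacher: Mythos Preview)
Your approach is exactly the paper's: set $\Lambda^i=\Gamma_i\cap\Z^d$, $\alpha_i=\tau_i$, $\beta_i=s_i$, verify \eqref{5.4}--\eqref{5.6} for the coefficients of $(f_i\psi)_{per}$, and invoke Theorem~\ref{t7}. The paper's proof is in fact considerably sketchier than yours---it simply asserts \eqref{es2} from $f_i\in\mathscr P^{\prime\tau_i}$ (your Peetre/Young argument fills this in), observes that $\xi\notin\Gamma_i$ forces $(x,\xi)\notin WF_{s_i}(f_i\psi)$, and then writes ``now as in the proof of Theorem~\ref{t7}''; the off-cone $\ell^2_{s_i}$-estimate and the compactness covering are left entirely implicit.

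The cutoff-invariance obstacle you flag is real and is glossed over in the paper as well. Your proposed resolution is the right one, and in the present situation it is actually simpler than the general microlocal lemma you allude to: since $f_i\psi$ is already compactly supported in $\mathbb T_{\eta,x_0}$, any $\phi\in\mathcal D(\mathbb T_{1,x_0})$ with $\phi\equiv 1$ on $\supp\psi$ satisfies $\phi\cdot(f_i\psi)=f_i\psi$, so the Fourier coefficients appearing in Theorem~\ref{sobwf} for such a $\phi$ are literally the $a_{i,n}$. The passage from the existential $\phi$ of Theorem~\ref{sobwf} to this particular $\phi$ is then the standard convolution-by-Schwartz argument you sketch in your last paragraph (and is in fact essentially the computation carried out in the proof of Theorem~\ref{t6*}). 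So your proof plan is complete once that routine step is written out.
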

\begin{proof} Let
$$({f_1\psi})_{per}=\sum_{k\in\mathbb Z^d}a_{1,k}e_k,  \quad ({f_2\psi})_{per}=\sum_{k\in\mathbb Z^d}a_{2,k}e_k.$$
Note, if $x\in\supp\psi$ and $\xi\in(\R^d\setminus{\{\bf{0}\}})\setminus\Gamma_1$, then $(x,\xi)\notin WF_{s_1}(f_1\psi)$. The same holds for $f_2\psi$.
Since 
$f_1\in\mathscr P^{\prime\tau_1}$ and $f_2\in\mathscr P^{\prime\tau_2}$, we know  that
\begin{equation}\label{es2}\sum_{k\in\mathbb{Z}^d\cap\Gamma_1}|a_{1,k}|^2\langle k\rangle^{-2\tau_1}<+\infty,\quad\sum_{k\in\mathbb{Z}^d\cap\Gamma_2}|a_{2,k}|^2\langle k\rangle^{-2\tau_2}<+\infty.
\end{equation}
Now as in the proof of Theorem \ref{t7}, we show that $f=(f_1\psi)_{per}(f_2\psi)_{per}$ exists and $f\in\mathscr{P}^\prime$.
\end{proof}

\begin{remark}\label{rgen}
Theorem 5.5 can be easily transferred to the case when one has several cones
$\Gamma^i_1$, $i=1,\ldots,l_1$ $($related to $f_1)$ and $\Gamma^j_2$, $j=1,\ldots,l_2$ $($related to $f_2)$
so that $\Gamma_1^i\cap\Z^d$ and $\Gamma_2^j\cap\Z^d$ contain index sets for $f_1$ and $f_2$, $i=1,\ldots,l_1$, $j=1,\ldots,l_2$ which are compatible index sets.
\end{remark}

\begin{remark}\label{r1} We will show below that under the assumption that $\Gamma_1\cap(-\Gamma_2)=\emptyset$ then condition in $a)$ of Theorem \ref{t9} holds in the case $d=2$ with $\gamma=2$. Our hypothesis is that condition $a)$ also holds $($with $\gamma=d)$ for $d\geqslant3$, but the structure of cones is more complex and we do not have the proof of this hypothesis for $d\geqslant3$.
\end{remark}

\begin{proof}[Proof of the assertion in the Remark $\ref{r1}$, for $d=2$]
We can assume that cones are acute because if it is not the case, we divide them into finite sets of such cones. So, assume that cones $\Gamma_1$ and $-\Gamma_2$ are acute and have empty intersection. By translation with vector $\overrightarrow{(0,0),(n_1,n_2)}$, there are several different positions of cones so that they have different surface of the domain laying between them. It can be equal to zero but the optimal case (maximal number of points with integer coordinates inside the intersection) is when they intersect in four points. Let us explain this case. We present the simplest position of cones (by rotations, this is not the restriction)
$$
\Gamma_1=\{(t,s): k_1t\geqslant s, t\geqslant 0 \},\quad -\Gamma_2=\{(t,s): k_2t\geqslant s, t\leqslant 0\},$$ where $k_2>k_1>0$.
Now translating $-\Gamma_2$ so that  the tip of the cone is $(n_1,n_2)$, one can calculate the points of intersection  of cones $\Gamma_1\cap\mathbb{Z}^2$ and $\big((n_1,n_2)-\Gamma_2\big)\cap\Z^2$. Coordinates of the sets of intersections, set points $A_1, A_2, A_3, A_4$ are linear combinations of the form
$$(\alpha^i_{1,1}n_1+\alpha^i_{1,2}n_2,\alpha^i_{2,1}n_1+\alpha^i_{2,2}n_2),\quad i=1,2,3,4,$$
where $\alpha^i_{j,l}$ depend on $k_1$ and $k_2$. Now, by calculating the surface of the area of the intersection of these cones, we conclude that the domain surface between two cones can be estimated by $C(n_1^2+n_2^2)$, for some $C>0$.
\end{proof}

Using Theorems \ref{t8} and \ref{t9}, we obtain the following statement.
\begin{corollary} Let $\varphi_i\in H^{s_i}$, $i=1,2$, and let $\Gamma_1$ and $\Gamma_2$ be cones so that $\Gamma_1\cap(-\Gamma_2)=\emptyset$. Assume that assertion $a)$ in Theorem $\ref{t9}$ holds.
\begin{itemize}
\item[a)] Let $x_0\in\mathbb R^d$, $f_1$, $f_2\in\mathscr D'(\mathbb R^d)$ and $\psi\in\mathscr D(\mathbb{T}_{\eta,x_0})$ with $\eta\in(0,1)$ so that
$\psi\equiv1$ in $\mathbb{T}_{\varepsilon,x_0}$, $\varepsilon<\eta.$
Assume that
$$\widehat{({f_1\psi})}=\widehat{\varphi_1}\sum_{k\in\mathbb Z^d}a_ke_k,  \quad \widehat{({f_2\psi})}=\widehat{\varphi_2}\sum_{k\in\mathbb Z^d}b_ke_k,$$
and that \eqref{es2} holds for $\sum_{k\in\mathbb Z^d}a_ke_k$ and $\sum_{k\in\mathbb Z^d}b_ke_k$.
Moreover, assume that condition $b)$ of Theorem $\ref{t9}$ holds.
Then, there exists $s\in\R$ so that
$$f_1\psi(\cdot)=\sum_{k\in\mathbb Z^d}a_k\varphi_1(\cdot+k),\quad  f_2\psi(\cdot)=\sum_{k\in\mathbb Z^d}b_k\varphi_2(\cdot+k)$$
are elements of $V_s(\varphi_1)$, $V_s(\varphi_2)$, respectively, and their product $(f_1\psi)*(f_2\psi)\in V_s(\varphi_1*\varphi_2)$.
\item[b)] Let $g_i\in V_{s_i}(\varphi_i)$, $i=1,2$, and let $(x_0,\xi_0)\in\mathbb{R}^d\times(\mathbb{R}^d\setminus\{\mathbf{0}\})$.
For $\widehat{g}_i=\widehat{\varphi}_if_i$ we assume that $pr_2\big(WF_{s_i}(f_i)\big)\subset\Gamma_i$, $i=1,2$. Moreover, we assume that \eqref{es2} holds, where $s_1\geqslant\tau_2\geqslant0$ and $s_2\geqslant\tau_1\geqslant0$. Then, there exists $s\in\R$ so that
$$g=g_1*g_2\in V_{s}(\varphi_1*\varphi_2)\quad\mbox{and} \quad g(\cdot)=\sum_{n\in\mathbb{Z}^d}a_n(\varphi_1*\varphi_2)(\cdot+n).$$
\end{itemize}
\end{corollary}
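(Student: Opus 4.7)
The plan is to combine the Fourier identity $\widehat{h}(\xi)=\widehat{\varphi}(\xi)\sum_k a_k e_k(\xi)$, which holds whenever $h=\sum_k a_k\varphi(\cdot+k)$ and follows from $\mathcal F^{-1}(e_k)=\delta(\cdot+k)$ together with the convolution theorem, with Theorem~\ref{t7} applied to the periodic series $F_i=\sum_k a_k^i e_k$, and then transfer the outcome to a shift-invariant expansion as in Theorem~\ref{t8}. In both parts, the assumption $\Gamma_1\cap(-\Gamma_2)=\emptyset$ together with condition $a)$ of Theorem~\ref{t9} is exactly the cardinality hypothesis~\eqref{5.6}; what differs between $a)$ and $b)$ is how the compatible coefficient estimates~\eqref{5.4}--\eqref{5.5} are extracted from the microlocal data.

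For part $a)$ I would first use the factorization $\widehat{f_i\psi}=\widehat{\varphi_i}F_i$ to write $f_i\psi=\sum_k a_k^i\varphi_i(\cdot+k)$, with $(a_k^1)=(a_k)$ and $(a_k^2)=(b_k)$. Since $f_i\psi\in\mathscr D^\prime$, the periodic factor $F_i$ is itself a periodic distribution, so $(a_k^i)\in\ell^2_{-\tau_i}$ for some $\tau_i$, and by Theorem~\ref{jed} each $f_i\psi$ represents an element of $V_{-\tau_i}(\varphi_i)$. Condition~\eqref{es2} supplies the inside-cone part of \eqref{5.4}--\eqref{5.5} with $\alpha_i=\tau_i$, while the wave-front hypothesis $pr_2(WF_{s_i}(f_i\psi))\subset\Gamma_i$ of Theorem~\ref{t9}$(b)$, routed through Theorem~\ref{sobwf}, supplies the outside-cone part with $\beta_i=s_i$; the balance inequalities $\beta_1\geqslant\alpha_2$ and $\beta_2\geqslant\alpha_1$ coincide with $s_1\geqslant\tau_2$ and $s_2\geqslant\tau_1$. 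Theorem~\ref{t7} then gives $F_1F_2=\sum_n c_n e_n\in\mathscr P^{\prime\tau}$ for some $\tau\in\R$, and the Fourier computation
\[\widehat{(f_1\psi)*(f_2\psi)}=\widehat{\varphi_1}\widehat{\varphi_2}\,F_1F_2=\widehat{\varphi_1*\varphi_2}\sum_n c_n e_n\]
inverts, via the same identity, to $(f_1\psi)*(f_2\psi)=\sum_n c_n(\varphi_1*\varphi_2)(\cdot+n)\in V_s(\varphi_1*\varphi_2)$ with $s=-\tau$.

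Part $b)$ proceeds analogously without the localizer $\psi$: writing $g_i=\sum_k a_k^i\varphi_i(\cdot+k)$ yields $\widehat{g_i}=\widehat{\varphi_i}f_i$ with $f_i=\sum_k a_k^i e_k\in\mathscr P^\prime$, and the assumption $pr_2(WF_{s_i}(f_i))\subset\Gamma_i$ combined with Theorem~\ref{sobwf} delivers the cone-exterior decay $\sum_{k\notin\Gamma_i}|a_k^i|^2\langle k\rangle^{2s_i}<+\infty$; together with \eqref{es2} and \eqref{5.6} this feeds Theorem~\ref{t7}, and Theorem~\ref{t8} then yields $g_1*g_2=\sum_n a_n(\varphi_1*\varphi_2)(\cdot+n)\in V_s(\varphi_1*\varphi_2)$. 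The main obstacle throughout is the bookkeeping needed to turn the microlocal statement $pr_2(WF_{s_i}(\cdot))\subset\Gamma_i$ into a single global estimate on $\Z^d\setminus\Gamma_i$: one must cover the unit sphere minus $\Gamma_i$ by finitely many open cones provided by Theorem~\ref{sobwf} and merge the resulting local estimates into a uniform $\ell^2_{s_i}$-bound; in part $a)$ there is the additional subtlety of transferring microlocal information about $f_i\psi$ onto its periodic factor $F_i$ through the smooth multiplier $\widehat{\varphi_i}\in L^2_{s_i}$. Once those coefficient estimates are in place, the rest is a direct assembly of Theorems~\ref{t7} and~\ref{t8}.
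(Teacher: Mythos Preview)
Your approach is correct and matches the paper's, which offers no detailed argument at all: the corollary is stated immediately after the words ``Using Theorems~\ref{t8} and~\ref{t9}, we obtain the following statement,'' and that is the entirety of the proof in the paper. Your proposal unpacks exactly this route---Theorem~\ref{t9} (whose proof is itself a reduction to Theorem~\ref{t7}) to handle the periodic factors, then Theorem~\ref{t8} to pass to the shift-invariant expansion---so you are not deviating from the authors' intended argument but rather filling in what they leave implicit. The subtleties you flag (covering $\mathbb S^{d-1}\setminus\Gamma_i$ by finitely many cones from Theorem~\ref{sobwf}, and in part~$a)$ the passage from microlocal data on $f_i\psi$ to coefficient decay of the periodic factor $F_i$ through the multiplier $\widehat{\varphi_i}$) are genuine and are not addressed in the paper either; acknowledging them is appropriate, though a fully rigorous write-up would need to carry them out.
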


\vskip1cm
{\bf Acknowledgment.}
S. Pilipovi\' c is supported by the Serbian Academy of Sciences and
Arts, project F10. A. Aksentijevi\' c and S. Aleksi\' c were supported by the
Science Fund of the Republic of Serbia, $\#$GRANT No $2727$, {\it Global
and local analysis of operators and distributions} - GOALS and they are gratefully acknowledge the financial support of the Ministry of
Science, Technological Development and Innovation of the Republic
of Serbia (Grants No. 451-03-66/2024-03/200122 \& 451-03-65/2024-03/200122).


\end{document}